\newcommand{\R}{\mathbb R}
\newcommand{\N}{\mathbb N}
\newcommand{\C}{\mathbb C}
\newcommand{\B}{\mathcal{B}}
\newcommand{\A}{\mathcal{A}}
\newcommand{\F}{\mathcal{F}}
\newcommand{\T}{\mathcal{T}}
\newcommand{\al}{\alpha}
\newcommand{\g}{\gamma}
\newcommand{\G}{\Gamma}
\newcommand{\la}{\lambda}
\newcommand{\io}{\iota}
\newcommand{\s}{\sigma}
\newcommand{\Om}{\Omega}
\newcommand{\om}{\omega}
\newcommand{\e}{\eta}
\newcommand{\ep}{\varepsilon}
\newcommand{\tep}{{\tilde{\varepsilon}}}
\newcommand{\del}{\partial}
\newcommand{\ti}[1]{\tilde{#1}}
\newcommand{\VG}{V\!G}
\newcommand{\1}{\mathds{1}}
\newcommand{\sle}{{\rm SLE}}
\newcommand{\2}{\tfrac{1}{2}}
\newcommand{\h}{\frac{1}{2}}
\newcommand{\hc}[1]{\boxminus #1}
\newcommand{\af}{{\hat{\alpha}_4}}
\newcommand{\at}{{\check{\alpha}_2}}
\theoremstyle{plain}
 \newtheorem{thm}{Theorem}[section]
 \newtheorem{lem}[thm]{Lemma}
 \newtheorem{corol}[thm]{Corollary}
\theoremstyle{remark}
 \newtheorem{rem}[thm]{Remark}
\DeclareMathOperator{\Var}{Var}
\DeclareMathOperator{\dist}{dist}
\DeclareMathOperator{\IM}{Im}
\DeclareMathOperator{\id}{id}
\title{Singularity of Nearcritical Percolation Exploration Paths}
\author{Simon Aumann\\[0.5ex] \textit{\small{Mathematisches Institut, Ludwig-Maximilians-Universit\"at M\"unchen}}\\[-0.5ex] \textit{\small{Theresienstr.\ 39, D-80333 M\"unchen, Germany}}\\[-0.5ex] \small{aumann@math.lmu.de}}
\begin{document}

\maketitle

\begin{abstract}
We show that the laws of scaling limits of nearcritical percolation exploration paths with different parameters are singular with respect to each other. This generalises a result of Nolin and Werner, using a similar technique. As a corollary, the singularity can even be detected from an infinitesimal initial segment. Moreover, nearcritical scaling limits of exploration paths are mutually singular under scaling maps. 
\end{abstract}

\begin{small}
\noindent
\textit{AMS Mathematics Subject Classification 2010:} 60K35, 82B43, 60G30, 82B27\\
\textit{Keywords:} nearcritical, percolation, exploration path, interface, scaling limit 
\end{small}

\section{Introduction}
One break-through of the mathematical theory of two-dimensional percolation was in 2001, when Smirnov proved the conformal invariance of the scaling limit of critical percolation interfaces on the triangular lattice. This paved the way for describing this limit by a Schramm-Loewner-Evolution and for determining various crossing probabilities. Thus nowadays the scaling limit of critical percolation is quite well understood. But there are also nearcritical scaling limits. These are obtained by choosing the probability for a site being open depending on the mesh size slightly different from the critical one, but converging to it in a well-chosen speed. These nearcritical limits are by far not as well understood as the critical ones. Garban, Pete and Schramm showed in \cite{gps13} that, in the quad-crossing space, there indeed exist nearcritical limits, not only limit points. But we do not use this fact, since we are interested in the exploration paths. For that, only the existence of limit points, not of a 
limit, is yet established.
 
Nolin and Werner showed in \cite{nw9} that every nearcritical scaling limit point of exploration paths is singular with respect to an $\sle_6$ curve, i.e.\ to the critical limit. In the present note, we enhance this result by showing that two different nearcritical scaling limits are singular with respect to each other (Theorem~\ref{mainthm}). It is even possible to detect the singularity by looking at an infinitesimal initial segment of the exploration path (Corollary~\ref{cor:infini}). Applying the main result to conformal maps, we obtain that nearcritical scaling limits are in general not conformally invariant or absolutely continuous. In fact, under scaling maps, they are mutually singular (Corollary~\ref{cor:conf}).

Interestingly, the proof of Nolin and Werner can be extended to our result. But one has to be careful. In fact, we also give a more detailed and self-contained version of their proof. Nevertheless, some modifications and slightly different approaches are needed. In particular, the non-existence of an analogue to Cardy's formula requires some work.  Namely, we need the fact that the probability of crossing a quad with fractal boundary can be well approximated using rather weak approximations to the quad (Lemma~\ref{lem:kern}).

The organisation of this note is as follows. In Section~\ref{sec:not} we introduce precisely the model and state the main theorem, which will be proved in Section~\ref{sec:proof}. But before there is an expository section, namely Section~\ref{sec:heu}. There we review some aspects of \cite{nw9} and give some heuristics why the the result of Nolin and Werner as well as our theorem should be true. Finally, in Section~\ref{sec:conf}, we discuss consequences of our result for conformal maps.

\section{Notation and Statement of the Main Theorem} \label{sec:not}
Let us start with the basic definitions and notations. Let $H_r:=\{z\in\C: |z|<r,\IM(z)>0\}$ be the upper half circle with radius $r>0$. We work on the hexagonal lattice with mesh size $\e>0$. Let $H^\e_r$ be all hexagons of size $\e$ which are entirely contained in $\overline{H_r}$.

We consider face percolation in $H^\e_r$ with different parameters $p^\mu$ and $p^\la$. Thereto let $\mu,\la\in\R$  and $\mu_\e,\la_\e\in\R$, $\e>0$, such that $\mu_\e\to\mu$ and $\la_\e\to\la$ as $\e\to0$. Each hexagon is independently of the others blue (open) with probability
\begin{equation*}
 p^\io = p^\io_\e \,=\, \h + \io_\e \cdot\frac{\e^2}{\al_4^\e(1)} 
\end{equation*}
and otherwise yellow (closed), where we choose $\io\in\{\mu,\la\}$ depending on the desired parameter. Here $\al_4^\e(R)$ is the probability that there exists four arms of alternating colours up to (Euclidean) distance $R$ in critical site percolation on the triangular lattice with mesh size $\e$. Smirnov and Werner showed in \cite[Theorem 4]{sw1} that $\al_4^\e(1)=\e^{\frac{5}{4}+o(1)}$ as $\e\to0$. Therefore (or by using  the five arm exponent) it follows that  $p^\io\to\2$ as $\e\to0$. As we are interested in that limit, we may hence choose $\e$ small enough such that $p^\io\in(0,1)$. Thus we work on the families of probability spaces
$$ \Big(\, \Om_\e:=\{\text{blue,yellow}\}^{H^\e_r}, \quad \mathfrak{P}(\Om_\e), \quad P^\io_\e := \bigotimes_{H^\e_r} \big( p^\io\delta_\text{blue} + (1-p^\io)\delta_\text{yellow}\big) \,\Big)_{\e>0}$$
with $\io\in\{\mu,\la\}$. The choice of $p^\io$ ensures that we are still in the critical window, but obtain scaling limits different from the critical one (if $\io\ne0$, of course). This follows from Kesten's scaling relations and can explicitly be deduced from \cite[Proposition 4]{nw9} together with \cite[Proposition 32]{n7}, for example.

If we colour the negative real axis blue and the positive axis yellow, then there is a unique path, called \emph{exploration path}, on the hexagonal lattice starting at the origin and stopping $\e$-close to the upper boundary of $H_r$, which has blue hexagons to the left and yellow hexagons to the right. Let us denote this path by the random variable 
$$ \g_\e:\, (\Om_\e,\mathfrak{P}(\Om_\e)) \rightarrow (\mathcal{S}_r,\B(\mathcal{S}_r)) \,,$$
where $\mathcal{S}_r$ (with Borel-$\s$-algebra $\B(\mathcal{S}_r)$ induced by the metric below) is the space of curves in $\overline{H_r}$, i.e. equivalence classes of continuous functions $[0,1]\to \overline{H_r}$. Two such functions $f,g$ represent the same curve if and only if $f=g\circ\phi$ for some increasing bijection $\phi:[0,1]\to[0,1]$. We introduce a topology on $\mathcal{S}_r$ via the metric 
$$\dist(f,g):=\inf_\phi\max_{t\in[0,1]} |f(t)-g\circ\phi(t)|\,$$
where the infimum is taken over all increasing bijections $\phi:[0,1]\to[0,1]$. Then $\mathcal{S}_r$ is a complete separable space. Let
$$ \G^\io_\e \, := \, \g_\e(P^\io_\e)$$
denote the law of $\g_\e$ under $P^\io_\e$, for $\e>0$ and $\io\in\{\mu,\la\}$. Using a technique developed by Aizenman and Burchard in \cite{ab99}, Nolin and Werner showed in \cite[Proposition 1]{nw9} that the family $(\G^\io_\e)_{\e>0}$ is tight, i.e. for each sequence $\e_k$ there is a subsequence $\e_{k_l}$ such that $\G^\io_{\e_{k_l}}$ converges weakly.

For the statement of the main theorem we need, in contrast to Nolin and Werner, a result using the Quad-Crossing Topology introduced by Schramm and Smirnov in \cite{ss11}. Therefore we review that concept very briefly. For a much more detailed account one should consult \cite[p.\ 1778f]{ss11}. Let $D$ be a domain. A \emph{quad} $q$ in $D$ is a topological quadrilateral, i.e.\ a homeomorphism $q:[0,1]^2\to q([0,1]^2)\subset D$. Let $\mathcal{Q}_D$ be the set of all quads in $D$. A quad $q$ is crossed by a percolation configuration, if the union of all blue (topologically closed) hexagons contains a connected closed subset of $\overline{q}:=q([0,1]^2)$ which intersects both opposite sides $\del_0q:=q(\{0\}\times[0,1])$ and $\del_2q:=q(\{1\}\times[0,1])$. This event is denoted by $\hc{q}\subset \Om_\e$.  We will further need the notations $\del_1q:=q([0,1]\times\{0\})$ and $\del_3q:=q([0,1]\times\{1\})$ for the other two sides of the quad. Moreover, let $q^\circ:=q((0,1)^2)$ be the interior and $\del q$ be the 
whole boundary of $q$.

Using a partial order on $\mathcal{Q}_D$ induced by crossings, one can define the set $\mathcal{H}_D$ of all closed lower sets $S\subset\mathcal{Q}_D$. Schramm and Smirnov constructed a topology on $\mathcal{H}_D$, namely the Quad-Crossing-Topology. For our purposes the following facts are enough. There is a random variable $cr: \Om_\e\to\mathcal{H}_D$ which assigns each percolation configuration the set of all crossed quads. Thus each probability measure on $\Om_\e$ induces a probability measure on $\mathcal{H}_D$. Moreover, the space of all probability measures on $\mathcal{H}_D$ is tight (\cite[Corollary 1.15]{ss11}). Finally, if $\mathbb{P}$ is any limit point of the measures $cr(P_\e^\mu)$, $\e>0$, then $\mathbb{P}[\del cr(\hc{q})]=0$ for every quad $q\in\mathcal{Q}_D$ (\cite[Lemma 5.1]{ss11}). Therefore there exists a sequence $(\e_k)_{k\in\N}$ with $\lim_{k\to\infty}\e_k=0$ such that $P^\mu_{\e_k}[\hc{q}]$ converges as $k\to\infty$ for all quads $q\in\mathcal{Q}_D$.

Now we are ready to state the main theorem of the present note.
\begin{thm} \label{mainthm}
 Let $\mu<\la$ be real numbers, $\mu_\e\to\mu$, $\la_\e\to\la$ and $r>0$. Let further $(\e_k)_{k\in\N}$ be a sequence converging to zero such that $P^\mu_{\e_k}[\hc{q}]$ converges for all quads $q\in\mathcal{Q}_{H_r}$ and such that $\G^\mu_{\e_k} \to \G^\mu$ and $\G^\la_{\e_k} \to \G^\la$ weakly for some measures $\G^\mu$ and $\G^\la$ on $(\mathcal{S}_r,\B(\mathcal{S}_r))$ as $k\to\infty$.
 
 Then the probability measures $\G^\mu$ and $\G^\la$ are singular with respect to each other.
\end{thm}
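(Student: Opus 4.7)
\medskip

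\textbf{Proposal of proof.} The plan is to exhibit a Borel set $A\subset\mathcal{S}_r$ with $\G^\mu(A)=1$ and $\G^\la(A)=0$. Following the scheme of Nolin and Werner, I will construct a scale-invariant density statistic of the curve whose almost-sure value differs between the two limit measures. Concretely, I pack many essentially disjoint quads at a small scale, count the fraction that are blue-crossed, and show that this fraction concentrates around a value strictly monotone in $\io$.

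First I fix a reference quad $q_0\in\mathcal{Q}_{H_r}$ attached to the negative real axis such that
$$ c_\io \,:=\, \lim_{k\to\infty} P^\io_{\e_k}[\hc{q_0}] $$
exists (by hypothesis of the theorem) and satisfies $c_\mu<c_\la$. The inequality $c_\mu\le c_\la$ is immediate from the monotone coupling $p^\mu\le p^\la$. Strict inequality in the limit follows from Russo's formula applied at the nearcritical level: the expected number of pivotals for $\hc{q_0}$ is of order $\e^{-2}\al_4^\e(1)$ times a positive constant, so that the gap $(\la_\e-\mu_\e)\cdot\e^2/\al_4^\e(1)$ in the definition of $p^\io$ translates, after integration in $\io$, into a strictly positive difference $c_\la-c_\mu$ of the limiting crossing probabilities. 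This is essentially the content of \cite[Proposition~4]{nw9}.

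For each scale $\rho>0$ I place $K_\rho$ translated and $\rho$-rescaled copies $q_{\rho,1},\dots,q_{\rho,K_\rho}$ of $q_0$ along a segment of the real axis inside $H_r$, pairwise disjoint and with $K_\rho\to\infty$ as $\rho\to0$. Set $N_\rho:=\sum_i\1_{\hc{q_{\rho,i}}}$. Since disjoint quads depend on disjoint hexagons, the indicators are independent under $P^\io_\e$, so $\Var(N_\rho)\le E[N_\rho]\le K_\rho$, and Chebyshev's inequality gives concentration of $N_\rho/K_\rho$ around its mean. Scale-covariance of the nearcritical model, encoded in the particular normalisation of $p^\io$, ensures that this mean converges to $c_\io$ as $\e\to0$ and then $\rho\to0$. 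Along a suitable subsequence $\rho_n\to0$, chosen by a Borel-Cantelli diagonal argument, one obtains $N_{\rho_n}/K_{\rho_n}\to c_\io$ almost surely, simultaneously under both $P^\mu_\e$ and $P^\la_\e$.

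The decisive step is to realise $N_\rho/K_\rho$ as a Borel function of the exploration curve $\g$. For a quad attached to the blue boundary, the event $\hc{q_{\rho,i}}$ agrees, up to a null event, with an event determined by $\g$ — roughly, that $\g$ enters and exits $\overline{q_{\rho,i}}$ through prescribed sides. Passing this identity to the scaling limit is precisely what Lemma~\ref{lem:kern} is designed for: it guarantees that crossings of quads with fractal limiting support can be approximated, with uniformly controlled error, by crossings of slightly perturbed quads whose indicators are $\G^\io$-almost surely continuous on $\mathcal{S}_r$. Combined with the weak convergence $\G^\io_{\e_k}\to\G^\io$ this yields a Borel functional $F(\g):=\limsup_{n\to\infty}N_{\rho_n}(\g)/K_{\rho_n}$ with $F=c_\io$ almost surely under $\G^\io$, and then $A:=\{F\le(c_\mu+c_\la)/2\}$ separates the two measures. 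The main obstacle I expect is precisely this last identification: transferring the percolation-level count $N_\rho$ into a Borel functional of the curve under only the comparatively weak topology on $\mathcal{S}_r$ and in the presence of fractal limits — this is exactly the delicate point the paper flags. The other ingredients (strict monotonicity via Russo, variance bound via independence, and subsequence extraction) should go through by the same type of estimates as in \cite{nw9}, but with $\G^\mu$ in the role of the reference measure instead of SLE$_6$, which is why Cardy's formula is no longer available and Lemma~\ref{lem:kern} must step in.
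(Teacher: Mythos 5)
Your plan founders on the ``scale-covariance'' step, and this is not a repairable detail but the very heart of the problem. The normalisation $p^\io=\2+\io_\e\,\e^2/\al_4^\e(1)$ is tuned to produce an effect of order one at the \emph{macroscopic} scale $r$; at a small scale $\rho$ the difference of crossing probabilities of a $\rho$-rescaled quad under $P^\mu_\e$ and $P^\la_\e$ is of order $\rho^2\,[\al_4^\e(\rho,1)]^{-1}\asymp\rho^{2-\af}\to0$, so both means converge, as $\e\to0$ and then $\rho\to0$, to the \emph{critical} (Cardy-type) value of the quad, not to two distinct constants $c_\mu<c_\la$. (Indeed, that nearcritical limits look critical at vanishing scales is exactly why the companion corollary on scaling maps is nontrivial.) Hence your statistic $N_{\rho}/K_{\rho}$ concentrates around the \emph{same} value under both measures and the set $A=\{F\le(c_\mu+c_\la)/2\}$ does not separate them. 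Any correct argument must instead balance the vanishing per-observation bias ($\approx\delta^{2-\af}$ at mesoscopic scale $\delta$) against the number of essentially independent observations the curve provides ($\gtrsim\delta^{-2+\at}$ good triangles) and against the fluctuation $\sqrt{\text{number of observations}}$; this is precisely where the inequality $2\af-\at>2$ enters in the paper, and your proposal never confronts this quantitative trade-off.

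There is a second structural gap: the crossing indicators of deterministic quads placed along the real axis are \emph{not} functions of the exploration path, since the path need never come near most of them, and no appeal to Lemma~\ref{lem:kern} can change that — that lemma compares $P_\e[\hc{q_n}\,\triangle\hc{q}]$ for approximating quads with fractal boundary; it does not turn percolation events into $\G^\io$-measurable or a.s.\ continuous functionals of the curve. The paper's construction avoids this by only using observables attached to the curve itself: the triangles the path actually makes good, the curve-dependent quads $d(t,\g)$, the conditional crossing probabilities given $\g[0,\s]$, and a martingale/optional-sampling variance bound under $P^\io_\e$, after which Lemma~\ref{lem:kern} is needed exactly because $\del d(t,\g)$ is fractal. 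Finally, a smaller point: the hypothesis gives convergence of $P^\mu_{\e_k}[\hc{q}]$ only for the parameter $\mu$, so you cannot simply declare that $c_\la$ exists; the paper is careful to build its functional $Z^{\delta,\mu}$ using only the $\mu$-crossing probabilities for this reason.
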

$\G^\mu$ and $\G^\la$ are distributions of the scaling limits of the discrete exploration paths (in the limit point sense). Let us remark that \cite[Proposition 6]{nw9} is included in this theorem as the special case $\mu=\mu_\e=0$. In that case the hypothesis on the quad crossing probabilities is always fulfilled since it follows from Cardy's formula. But in our case, we unfortunately do not have any analogue; that is the reason for the additional condition.

The theorem also holds if $\mu>\la$, i.e.\ if the condition on the quad crossing probabilities holds for the larger value. In that case quite a few inequality signs have to be switched. Thus for better readability, we restrict ourselves to the case $\mu<\la$.

Actually we do not need to look at the whole exploration path to detect the singularity. In fact, it is enough to look at an infinitesimal initial segment as the following corollary shows. We consider the space $(\mathcal{S}_1,\B(\mathcal{S}_1))$ of curves in $H_1$. Let
$$ \tau_n(\g) := \inf\{t\ge0: |\g(t)|=\tfrac1n\}$$
be the first exit time of $H_{\frac1n}$ and
$$ \A_n := \s(\id[0,\tau_n], \id(0)=0) $$
be the $\s$-algebra generated by curves starting at the origin until exiting $H_{\frac1n}$, $n\in\N$. Then $\A_n$, $n\in\N$, is decreasing. Let
$$ \A := \bigcap_{n\in\N} \A_n $$
be their tail-$\s$-algebra, the $\s$-algebra of infinitesimal initial segments of paths starting at the origin. With that notation, Theorem~\ref{mainthm} implies
\begin{corol} \label{cor:infini}
 Under the conditions of Theorem~\ref{mainthm}, the laws $\G^\mu$ and $\G^\la$ restricted to $\A$ are singular with respect to each other.
\end{corol}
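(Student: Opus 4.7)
Plan. The approach is to exhibit, for every $n\in\N$, an event $E_n\in\A_n$ that witnesses the singularity of $\G^\mu$ and $\G^\la$, and then to combine them into a single event in the tail $\s$-algebra $\A$.

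\emph{Singularity at every scale.} For each fixed $n$, I would apply Theorem~\ref{mainthm} with $r$ replaced by $\tfrac1n$. The hypothesis on quad-crossing probabilities transfers from $H_r=H_1$ to $H_{1/n}$, because $\mathcal{Q}_{H_{1/n}}\subset\mathcal{Q}_{H_1}$. For the required weak convergence at scale $\tfrac1n$, I would use tightness of the family of laws of the discrete exploration in $H_{1/n}$ to extract a subsequence of $(\e_k)$ along which the in-$H_{1/n}$ laws converge, and apply a diagonal argument to obtain a single subsequence that works simultaneously for all $n$. By the discrete locality of the interface, the initial segment in $H_{1/n}$ of the path $\g_\e$ exploring $H_1$ has, under $P^\io_\e$, the same law as the full exploration in $H_{1/n}$; a continuity argument for the map $\g\mapsto\g|_{[0,\tau_n]}$ then identifies the weak limit of these restrictions with the pushforward of $\G^\io$ under that map. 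Theorem~\ref{mainthm} at scale $\tfrac1n$ now produces two mutually singular limits, and pulling back via the initial-segment map yields an event $E_n\in\A_n$ with $\G^\mu(E_n)=1$ and $\G^\la(E_n)=0$.

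\emph{From $\A_n$ to the tail.} Set $B:=\limsup_n E_n=\bigcap_{m}\bigcup_{n\ge m}E_n$. Since $(\A_n)_{n\in\N}$ is decreasing, each $E_n\in\A_n\subset\A_m$ for $n\ge m$, so $\bigcup_{n\ge m}E_n\in\A_m$; consequently $B\in\bigcap_m\A_m=\A$. Continuity of $\G^\mu$ from above gives $\G^\mu(B)=\lim_m\G^\mu\bigl(\bigcup_{n\ge m}E_n\bigr)=1$, while subadditivity gives $\G^\la(B)\le\sum_{n\ge m}\G^\la(E_n)=0$ for every $m$. Hence $\G^\mu|_\A\perp\G^\la|_\A$, as claimed.

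\emph{Main obstacle.} The second step is bookkeeping; the substance of the argument lies in the first step, and more precisely in identifying the scaling limit of the discrete explorations in $H_{1/n}$ with the initial-segment restriction of $\G^\io$. This amounts to establishing $\G^\io$-almost-sure continuity of the stopping-time map $\g\mapsto\g|_{[0,\tau_n]}$, equivalently the absence of $\G^\io$-mass on curves making tangential contact with $\del H_{1/n}$ at their first exit. The approximation tools developed in this paper for crossing events across fractal boundaries (in particular the approach behind Lemma~\ref{lem:kern}) are naturally suited to handling exactly this boundary-regularity issue, by approximating $\del H_{1/n}$ by families of quads whose crossing events are automatic continuity points of the weak convergence.
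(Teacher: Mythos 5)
Your proposal is correct and follows essentially the same route as the paper: apply Theorem~\ref{mainthm} at radius $\tfrac1n$ to produce separating events $E_n\in\A_n$, then combine them into a tail event via a $\limsup$/$\liminf$ construction, exactly as in the paper's (complementary-labelled) set $A_*=\bigcup_m\bigcap_{n\ge m}A_n$. Your first step merely makes explicit the identification of $\G^\io$ restricted to $\A_n$ with the scaling limit of explorations in $H_{1/n}$ (via locality, tightness and a.s.\ continuity of the stopping map), a point the paper's proof leaves implicit.
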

\begin{proof}
 By Theorem~\ref{mainthm} applied to $r=\frac1n$, there are sets $A_n\in\A_n$ with $\G^\mu[A_n]=0$ and $\G^\la[A_n]=1$. We set
 $$ A_* := \bigcup_{m\ge1} \bigcap_{n\ge m} A_n \,.$$
 Then $A_*\in\A$. Since countable unions or intersection of sets of probability zero respectively one have probability zero respectively one, it follows that $\G^\mu[A_*]=0$ and $\G^\la[A_*]=1$, which proves the corollary.
\end{proof}

We conjecture that Theorem~\ref{mainthm} and its corollary also hold on other lattices. In fact, if we can apply RSW techniques, most elements of the proof work. We need the separation lemmas and other results of \cite{n7}, which are delicate consequences of RSW (\cite[Theorem 2]{n7}). Thus they remain true on other lattices, cf.\ \cite[Section 8.1]{n7}. We further need the following bounds on arm events. Let $\al^\e_2(\rho,R)$ and $\al^\e_4(\rho,R)$ be the probabilities of the events that at critical percolation with mesh size $\e$ there exist two respectively four arms of alternating colours inside an annulus with radii $\rho$ and $R$ (i.e., in particular, $\al^\e_4(\e,R)=\al^\e_4(R)$). We need that there are ``exponents'' $\af,\at>0$ and constants $c,c'>0$ such that
$$ \al^\e_2(\rho,R) \ge c (\rho/R)^\at \qquad\text{and}\qquad \al^\e_4(\rho,R) \le c' (\rho/R)^\af $$
for all $0<\e\le\rho\le R$ and such that
\begin{equation} \label{eq:inexp}
 2\af-\at > 2 \,. 
\end{equation}
Since the two arm exponent in the half plane exists and is 1 as a consequence of RSW (see \cite[Theorem 23]{n7}, for instance), it follows that we can choose  $\at\le1<2$, which we also need. While the analogues to \cite[Proposition 13]{n7} and \cite[Theorem 10]{n7} yield the existence of such exponents also for other lattices, inequality (\ref{eq:inexp}) is yet proven only for site percolation on the triangular lattice (or equivalently, face percolation on the hexagonal lattice). Indeed, we can choose $\at=\frac14-\beta$ and $\af=\frac54+\beta$ for any $\beta>0$ there. Since the former inequality is the only needed special property of the triangular lattice, we choose to write up the proof with the exponents $\at$ and $\af$ and not with the explicit values. Hence the results can immediately be enhanced to other lattices as soon as inequality (\ref{eq:inexp}) is established.

\section{Heuristics} \label{sec:heu}
This section is of expository nature and therefore not rigorous. First we review some aspects of \cite{nw9}. Then we give a heuristic explanation why a nearcritical scaling limit should be singular with respect to the critical or to another nearcritical scaling limit. These heuristics could in fact also be seen as an outline of the proof. Formally, this section is not needed for the remainder of the article.

Let us recall some of our notation: $P^\io_\e$ denotes the probability measure of nearcritical percolation with parameter $\io\in\R$, i.e. a site is open with probability
$$p^\io \,=\, \h + \io \cdot\frac{\e^2}{\al_4^\e(\e,1)}  \,.$$
Moreover, the random variable $\g_\e$ denotes the exploration path and $\G^\io_\e$ its law under $P^\io_\e$.

A basic concept of nearcritical percolation is the introduction of a characteristic length.  Below that length, the Russo-Seymour-Welsh Theory (RSW) is still valid. This means that the probability that a set is crossed by the percolation configuration (in some specific way) does only depend on the shape of the set, but not on its size -- as long as this size is below the characteristic length. In the set-up considered in this article, the mesh size of the lattice and the nearcritical probabilities are chosen such that the characteristic length is of order one. Thus RSW techniques are applicable.

The first result of Nolin and Werner \cite[Proposition~1]{nw9} shows tightness of the laws of the exploration paths. We shortly outline their proof. It is an application of \cite[Theorem~1.2]{ab99}.  Let us denote the annulus around $x$ with radii $\rho<R$ by $A(x,\rho,R)$. RSW considerations imply that there exist some constants $c,\alpha>0$ such that
$$ P^\io_\e\big[\g_\e \text{ crosses } A(x,\rho,R)] \le c(\rho/R)^\alpha $$
uniformly for all $\e\le\rho\le R$. Using the BK Inequality, it follows that, for all $k\in\N$,
$$ P^\io_\e\big[\g_\e \text{ crosses } A(x,\rho,R) \text{ $k$ times}] \le c_k(\rho/R)^{\alpha k} \,.$$
Therefore the hypothesis of \cite[Theorem~1.2]{ab99} is fulfilled and tightness follows. This means that for each sequence $\e_k$ there is a subsequence $\e_{k_l}$ such that $\G^\io_{\e_{k_l}}$ converges weakly.

Nolin and Werner also determined the Hausdorff dimension of any sub-sequential scaling limit of the critical and nearcritical exploration paths. It is $7/4$ in both cases, see \cite[Proposition~3]{nw9}. The proof is based on RSW techniques and the knowledge of the two-arm exponent of critical percolation.

The perhaps most important result of \cite{nw9} is Proposition~6. It states that the law of any nearcritical sub-sequential limit is singular with respect to the law of an $\sle_6$ curve, which is the critical limit. As already mentioned, we enhance this result in the present note and show that $\G^\mu\bot\G^\la$, where $\G^\io$ is a limit point of $\G^\io_\e$, $\io\in\{\mu,\la\}$. In the following, we heuristically argue why these theorems hold.

Let us consider an equilateral triangle $\Delta$ of size $\delta$. The scale $\delta$ should be an intermediate one, i.e $\e\ll\delta\ll1$. We assume that the exploration path $\g_\e$ entered the triangle somewhere in the middle of the triangle's bottom line and is at time $\s$ somewhere in the middle of the triangle. If that is the case, we say that the triangle is \emph{good} for $\g_\e$. 
\begin{figure}[ht]
\begin{minipage}{0.47\textwidth} \begin{center}
\begin{picture}(50,44)
 \thicklines
 \put(0,0){\line(1,0){50}}
 \put(0,0){\line(500,866){25}}
 \put(50,0){\line(-500,866){25}}
 \qbezier(26,-1)(22,2)(23.5,6)
 \qbezier(23.5,6)(24,12)(24.5,6)
 \qbezier(24.5,6)(25,0)(25.5,6)
 \qbezier(25.5,6)(26,9)(25,12)
 \put(25,12){\circle*{1}} \put(18.5,12.8){$\g(\s)$}
 \put(25,0){\circle*{1}}
 \put(50,0){\circle*{1}}
 \qbezier[12](25,12)(30,27.5)(35,12)
 \qbezier[7](35,12)(37,6)(38,0)
 \put(36,10){\huge ?}
\end{picture} 
\caption{A (maybe very) good triangle \label{fig:dreieck1}} \end{center}
\end{minipage}
\hfill
\begin{minipage}{0.47\textwidth} \begin{center}
\begin{picture}(50,44)
 \thicklines
 \put(0,0){\line(1,0){50}}
 \put(0,0){\line(500,866){25}}
 \put(50,0){\line(-500,866){25}}
 \qbezier(26,-1)(22,2)(23.5,6)
 \qbezier(23.5,6)(24,12)(24.5,6)
 \qbezier(24.5,6)(25,0)(25.5,6)
 \qbezier(25.5,6)(26,9)(25,12)
 \put(25,12){\circle*{1}} 
 \put(25,0){\circle*{1}}
 \put(50,0){\circle*{1}}
 \put(30,20){\circle{1}}
 \qbezier[16](30,19.5)(34,8)(26,8)
 \qbezier[16](30,20.5)(28,28)(20,34.6)
 \thinlines
 \qbezier(29.5,20)(5,13)(23.5,6)
 \qbezier(30.5,20)(40,15)(42,0)
 
 \put(27,14){\large $\g$}
 \qbezier[12](25,12)(30,27.5)(35,12)
 \qbezier[7](35,12)(37,6)(38,0)
\end{picture}
\caption{A pivotal site with four arms \label{fig:piv}}  \end{center}
\end{minipage}
\end{figure}
We even look at the following stronger event: Conditionally on $\g_\e[0,\s]$, we ask whether $\g_\e$ exists the triangle on the right part of the bottom line. In that case we call the triangle even \emph{very good} for $\g_\e$. This events are schematically drawn in Figure~\ref{fig:dreieck1}.

We estimate the difference of the probability of being very good, conditionally on $\g_\e[0,\s]$, under $P^\la_\e$ and under $P^\mu_\e$. Thereto we use the standard monotone coupling of percolation with different parameters $p\in[0,1]$ (for all hexagons not discovered by $\g_\e[0,\s]$). Thus the set $\om(p)$ of blue hexagons at level $p$ increases. If a good triangle $\Delta$ is very good for $\g_\e(\om(p^\la))$, but not for $\g_\e(\om(p^\mu))$, then there exists a site $x$ in the triangle which is pivotal for some crossing event and switched from yellow to blue, cf.\ Figure~\ref{fig:piv}. It is pivotal, iff there are four arms of alternating colours from $x$ to some described parts of the boundary. Therefore we conclude
\begin{align*}
 & P^\la_\e\big[\Delta\text{ is very good for }\g_\e\mid\g_\e[0,\s]\big] -
 P^\mu_\e\big[\Delta\text{ is very good for }\g_\e\mid\g_\e[0,\s]\big] \\
 =\;\;&
 P\big[\Delta\text{ is very good for }\g_\e(\om(p^\la))\text{ but not for }\g_\e(\om(p^\mu))\mid\g_\e[0,\s]\big] \\
 \approx\;\;  &
 P\big[\exists\, x\in\Delta\setminus\g_\e[0,\s] :\text{ four arms from }x\text{ to }\partial\Delta, x\text{ switched between }p^\mu\text{ and }p^\la\big]
\end{align*}
Since the crossing event is increasing, the latter event can happen only for one $x$ inside the triangle. Since there are around $(\delta/\e)^2$ sites inside the triangle, we conclude
\begin{align*}
  &P\big[\exists\, x\in\Delta\setminus\g_\e[0,\s] :\text{ four arms from }x\text{ to }\partial\Delta,\, x\text{ switched between }p^\mu\text{ and }p^\la\big] \\
  \approx\;\;&
  (\delta/\e)^2 \al_4^\e(\e,\delta) (p^\la-p^\mu) \\
  =\;\;&
  (\delta/\e)^2 \al_4^\e(\e,\delta)\, (\la-\mu)\,\e^2/\al_4^\e(\e,1) \,,
\end{align*}
where we used
$$ p^\la-p^\mu = \2+\la \e^2 /\al_4^\e(\e,1) - \2 - \mu \e^2 /\al_4^\e(\e,1) = (\la-\mu)\,\e^2/\al_4^\e(\e,1)$$
in the last step. Now $\la-\mu\asymp1$ and quasi-multiplicativity, i.e.\ $\al_4^\e(\e,1)\asymp\al_4^\e(\e,\delta)\al_4^\e(\delta,1)$, and finally $\al_4^\e(\delta,1)\to\delta^{5/4}$ yield
$$ (\delta/\e)^2 \al_4^\e(\e,\delta)\, (\la-\mu)\,\e^2/\al_4^\e(\e,1) 
 \,\approx\,
 \delta^2 / \al_4^\e(\delta,1) \,\approx\, \delta^{3/4} \,.$$
Thus we established the estimate
$$ P^\la_\e\big[\Delta\text{ very good for $\g$}\mid\Delta\text{ good for $\g$}\big] - P^\mu_\e\big[\Delta\text{ very good for $\g$}\mid\Delta\text{ good for $\g$}\big] \,\approx\, \delta^{3/4} $$
for every triangle $\Delta$ of scale $\delta$.

We will use this estimate to evaluate the expectation of the random variable
$$ Z^\delta(\g) := \#\{\text{very good triangles of scale $\delta$ for $\g$}\} - E^\mu[\#\{\text{very good triangles of scale $\delta$ for $\g$}\}]\,.$$
Since the Hausdorff dimension of the exploration path is $7/4$, it touches approximately $\delta^{-7/4}$ triangles. By RSW, the number of good triangles is of the same order of magnitude. Therefore we conclude
$$  E^\mu[Z^\delta] =\delta^{-7/4}\cdot0= 0 \qquad\text{and}\qquad E^\la[Z^\delta]\approx \delta^{-7/4}\cdot\delta^{3/4} = \delta^{-1}\,.$$
Though the events being good or very good of different triangles are not independent, we can conclude using a martingale approach that
$$ \Var^\mu[Z^\delta] \le \delta^{-7/4} \qquad\text{and}\qquad \Var^\la[Z^\delta] \le \delta^{-7/4} \,.$$
Now by Chebyshev's inequality, it follows that
$$ P^\mu[Z^\delta > \delta^{-15/16}] \,\le\, \delta^{15/8}\Var^\mu[Z^\delta] \,\le\, \delta^{15/8}\delta^{-7/4}=\delta^{1/8} $$
and
$$ P^\la[Z^\delta < \delta^{-15/16}] \,\approx\, P^\la\big[Z^\delta -E^\la[Z^\delta]< \delta^{-\frac{15}{16}}-\delta^{-1}\big]
\,\le\, \big(\delta^{-15/16}(1-\delta^{-\frac{1}{16}})\big)^{-2}\Var^\la[Z^\delta] \,\le\, \delta^{1/8}\,.$$
Now we choose a sequence of scales $(\delta_n)_n$ such that $\delta_n^{1/8}$ is summable. Then the Lemma of Borel-Cantelli implies
$$ P^\mu[Z^{\delta_n}(\g) > \delta_n^{-15/16}\text{ for infinitely many }n] = 0 $$
and
$$ P^\la[Z^{\delta_n}(\g) < \delta_n^{-15/16}\text{ for infinitely many }n] = 0 \,.$$
As the complements of these events are disjoint, the mutual singularity of $\G^\mu=\g(P^\mu)$ and $\G^\la=\g(P^\la)$ follows.

\section{Proof of the Main Theorem} \label{sec:proof}
We partition the rigorous proof of Theorem~\ref{mainthm} in four subsections. In Section~\ref{ssec:quad} we prove a lemma which is also of independent interest. It states that we can approximate the probability of crossing a quad even if it has fractal boundary and if we use quite weak approximations to it. In Section~\ref{ssec:onetriangle} we look at one mesoscopic triangle, whereas in Section~\ref{ssec:manytriangles} we give estimates for many mesoscopic triangles. Finally, in Section~\ref{ssec:limit}, we consider the continuum limit to conclude the proof of Theorem~\ref{mainthm}.

\subsection{A Quad Crossing Lemma} \label{ssec:quad}

We say that a sequence $(q_n)_{n\in\N}$ of quads converges in the kernel (or Caratheo\-dory) sense to a quad $q$ with respect to some $z_0\in\C$, if 
\begin{itemize}
 \item $z_0\in q_n^\circ$ for all $n\in\N$ and $z_0\in q^\circ$, 
 \item for every $z\in q^\circ$ there exists a neighbourhood of $z$ which is contained in all but finitely many $q_n^\circ$ (and in $q^\circ$),
 \item for each $z\in\del q$ there exist $z_n\in\del q_n$ with $z_n\to z$ and
 \item $q_n(i,j)\to q(i,j)$ for $(i,j)\in\{0,1\}^2$.
\end{itemize}
This is the usual kernel convergence for domains with the additional requirement that the corners of the quads converge. We further need the following condition, which is illustrated in Figure~\ref{fig:quad1}:
\begin{equation} \label{eq:regin}
 \begin{array}{l}
  \forall\, \ep>0 \;\exists\, n_0\in\N \;\forall\, n\ge n_0, i\in\{0,1,2,3\}: \\[0.3ex]
  \qquad U_\ep(\del_iq)\cap\big(\overline{q}\cup U_\ep(\del_{i-1}q)\cup U_\ep(\del_{i+1}q)\big) \text{ contains a path connecting } \\[0.3ex]
  \qquad \del_{i-1}q_n\text{ and }\del_{i+1}q_n \text{ not intersecting }\del_iq_n
 \end{array}
\end{equation}
Here and in the following, $U_\ep(\cdot)$ denotes the  $\ep$-neighbourhood. We use cyclic indexes, i.e. $3+1\equiv0$. The condition demands that $\del_iq_n$ is not close to any other side of $q_n$ or $q$ inside the quad for a long time. Thus inside $\overline{q}$, $\del_iq_n$ is close to $\del_iq$. But note that there may be parts of $\del_iq_n$ far away from $\del_iq$ and even $\del q$ outside $\overline{q}$.
\begin{figure}[ht]
 \begin{center}
   \begin{overpic}[width=0.7\textwidth]{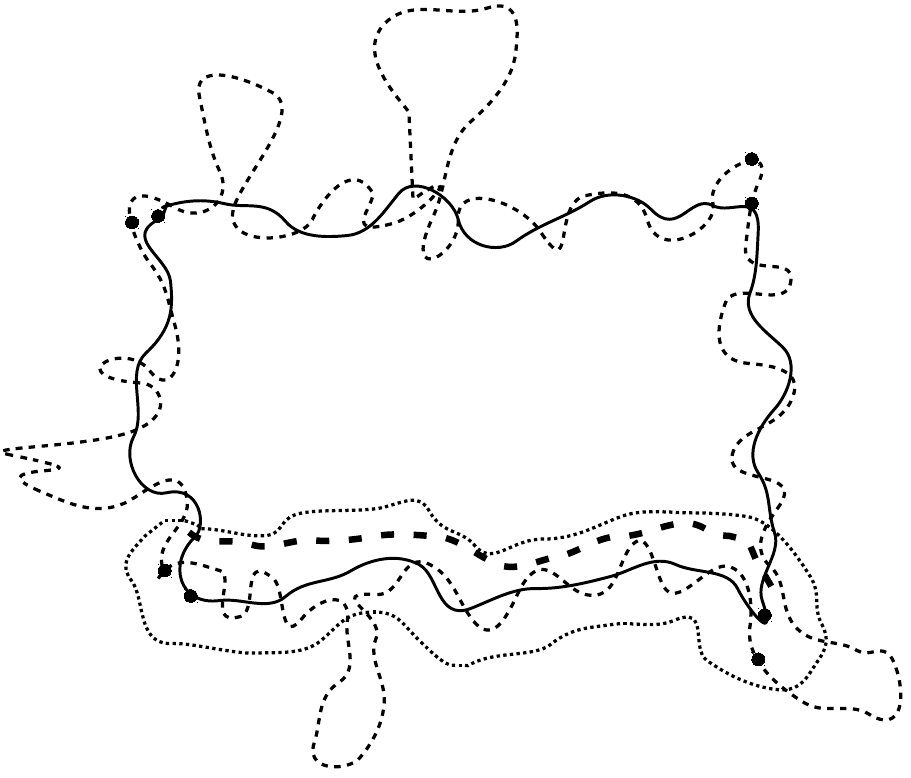} 
    \put(6,38.5){$\del_0q_n$}
    \put(43,6){$\del_1q_n$}
    \put(88,54){$\del_2q_n$}
    \put(52,69){$\del_3q_n$}
    \put(15,34){$\del_0q$}
    \put(45,16.5){$\del_1q$}
    \put(84,35){$\del_2q$}
    \put(54,55){$\del_3q$}
    \put(61,12){$U_\epsilon(\del_1q)$}
    \put(27,33){\small separating path}
    \put(37,32.5){\vector(0,-1){6.2}}
   \end{overpic}
 \end{center}
 \caption{Quads $q$ (solid) and $q_n$ (dashed) satisfying condition (\ref{eq:regin}) with the neighbourhood of $\del_1q$ (fine dotted) and a separating path (strong dashed) \label{fig:quad1}}
\end{figure}

\begin{lem} \label{lem:kern}
 Let some quads $q_n, n\in\N,$ converge in the kernel sense to a quad $q$ as $n\to\infty$ (with respect to some $z_0$). Assume further that condition (\ref{eq:regin}) is fulfilled. Let $P_\e$, $\e>0$, be any (near-)critical probability measures, i.e.\ $P_\e=P_\e^\io$ for any bounded sequence $(\io_\e)_\e\subset\R$.
 
 Then for all $\rho>0$ there exist $n_0\in\N$ and $\e_0>0$ such that for all $n\ge n_0$ and $\e\le\e_0$
 $$ P_\e\big[\hc{q_n}\,\triangle\hc{q}\big] \,\le\, \rho \,,$$
 where $\triangle$ denotes the symmetric difference.
\end{lem}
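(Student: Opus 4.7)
The plan is to sandwich both $\hc{q}$ and $\hc{q_n}$ between the crossing events of two piecewise-linear polygonal quads $q^-$ and $q^+$ that lie very close to $q$, thereby reducing the estimate on the symmetric difference to a small-probability two-arm event in a thin tubular neighbourhood of $\partial q$.

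Fix $\tau > 0$ (to be chosen at the end). For $n$ large enough, condition (\ref{eq:regin}) applied with parameter $\tau$ produces, for each side $i \in \{0,1,2,3\}$, a topological arc $\pi_i$ lying in the $\tau$-neighbourhood of $\partial_i q$ (with the corner allowance stated in the condition) on the interior side of $\partial_i q_n$, joining the two sides of $q_n$ adjacent to $\partial_i q_n$ without touching $\partial_i q_n$ itself. Kernel convergence guarantees that, near each corner $q(i,j)$, the arcs $\pi_i$ and $\pi_{i\pm 1}$ can be coherently connected through that corner region together with short segments of $\partial q_n$; after replacing the concatenation by its piecewise-linear $\tau$-approximation one obtains a polygonal Jordan domain $q^-$ whose closure lies in $q^\circ \cap q_n^\circ$. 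By construction, any blue crossing of $q^-$ from $\partial_0 q^-$ to $\partial_2 q^-$ is automatically a blue crossing of $q$ (it stays inside $q$), and by the separating property of $\pi_1$ and $\pi_3$ it also induces a blue crossing of $q_n$ (since any dual yellow crossing of $q_n$ from $\partial_1 q_n$ to $\partial_3 q_n$ would have to meet such a blue crossing). Hence $\hc{q^-} \subseteq \hc{q} \cap \hc{q_n}$. An entirely symmetric outward construction yields a polygonal quad $q^+$ with $\overline{q} \cup \overline{q_n} \subseteq (q^+)^\circ$ and $\hc{q} \cup \hc{q_n} \subseteq \hc{q^+}$, so $\hc{q_n} \triangle \hc{q} \subseteq \hc{q^+} \setminus \hc{q^-}$.

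To bound the sandwich difference, observe that on $\hc{q^+} \setminus \hc{q^-}$ there is a blue crossing of $q^+$ but no blue crossing of $q^-$, hence by duality a yellow dual crossing blocking $q^-$. These two disjoint paths of opposite colours must both traverse the annular region $q^+ \setminus q^-$, which is contained in a neighbourhood of $\partial q$ of width $O(\tau)$. Covering the polygonal boundary of this annulus by $O(\tau^{-1})$ balls of radius $\tau$ and applying standard RSW-based polychromatic half-plane two-arm estimates in each ball (uniform in the mesh size $\varepsilon \le \varepsilon_0(\tau)$ via Kesten's near-critical stability of arm probabilities), one obtains $P_\varepsilon[\hc{q^+} \setminus \hc{q^-}] \to 0$ as $\tau \to 0$. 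Choosing $\tau$ sufficiently small then gives the required bound $\rho$.

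The main obstacle is the topological construction of $q^-$ and $q^+$, in particular verifying that every blue crossing of $q^-$ is also a blue crossing of $q_n$ and not merely of $q$. Naively eroding $q$ by $\tau$ is insufficient because $\partial q_n$ may wiggle back and forth across any simple $\tau$-shift of $\partial q$ on all intermediate scales down to the mesh. Condition (\ref{eq:regin}) is tailored precisely to provide separating arcs $\pi_i$ at one prescribed scale $\tau$, and weaving these four arcs together coherently at the four corners of the quad so that $\partial q^-$ is a single simple closed curve interior to both $q$ and $q_n$ is where most of the combinatorial work lies.
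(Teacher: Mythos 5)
Your sandwich is asserted as a pair of deterministic inclusions, and that is where the argument breaks. For quads $q^\pm$ with $\overline{q^-}\subset q^\circ\cap q_n^\circ$ and $\overline q\cup\overline{q_n}\subset (q^+)^\circ$, neither $\hc{q^-}\subseteq\hc{q}\cap\hc{q_n}$ nor $\hc{q}\cup\hc{q_n}\subseteq\hc{q^+}$ holds ``by construction'': a blue crossing of $q^-$ only joins $\del_0q^-$ to $\del_2q^-$, which lie strictly inside $q$ and $q_n$, so it need not touch $\del_0q,\del_2q$, let alone $\del_0q_n,\del_2q_n$; and a crossing of $q$ or $q_n$ need not reach the sides of a strictly larger $q^+$. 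Your duality argument for $\hc{q^-}\subseteq\hc{q_n}$ fails concretely: a yellow path in $\overline{q_n}$ from $\del_1q_n$ to $\del_3q_n$ can slip around an endpoint of the blue crossing through the strip between $\del_0q^-$ and $\del_0q_n$, a region on which the event $\hc{q^-}$ imposes no colours. These discrepancies are events of small but positive probability and must be estimated, not excluded; the paper does this in two separate ways: the discrepancy between $q$ and its $\tep$-approximations is handled by a chain of four quads each differing in one side and Lemma A.1 of \cite{ss11}, while the discrepancy coming from $q_n$ is handled by a two-scale argument ($\ep\ll\tep=\ep^\al$) showing that such configurations force a polychromatic six-arm event in an annulus of radii $2\ep$ and $\tep-2\ep$, whose exponent exceeds $2$ and so beats the $c\ep^{-2}$ covering count. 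The latter is unavoidable because neither kernel convergence nor condition (\ref{eq:regin}) prevents $\del q_n$ from making macroscopic excursions outside $\overline q$ (the paper states explicitly that in general $\hc{q^4}\not\subseteq\hc{q_n}\not\subseteq\hc{q^0}$); for the same reason your claim that $q^+\setminus q^-$ lies in an $O(\tau)$-neighbourhood of $\del q$ is incompatible with $\overline{q_n}\subset(q^+)^\circ$.

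The concluding probabilistic estimate is also unsound as stated. Covering the boundary region by $O(\tau^{-1})$ balls of radius $\tau$ and invoking half-plane two-arm bounds presupposes a rectifiable (essentially smooth) boundary, whereas the lemma assumes no regularity at all --- the paper remarks that with a smooth boundary one could simply use the three-arm half-plane exponent. A general quad boundary may require $\asymp\tau^{-2}$ balls of radius $\tau$, and a two-arm exponent (at most $1$ even in the half plane, and far below $2$ in the plane) cannot compensate for that count; moreover half-plane arm estimates are not applicable near a fractal boundary in the first place. This is precisely why the paper's proof uses only the full-plane six-arm exponent at two well-separated scales together with the perturbation lemma of \cite{ss11}, whose proof (conditioning on the extremal crossing plus RSW) needs no boundary regularity. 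To repair your proposal you would have to replace both deterministic inclusions by probabilistic estimates of exactly this kind, at which point you have essentially reconstructed the paper's argument.
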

Let us remark that we do not impose any smoothness conditions on the boundary of the quad. Otherwise, we could just use the $3$-arm-exponent in the half plane. We further remark that the proof relies only on RSW techniques. Thus the lemma is valid on any lattice where RSW works.

In order to prove Lemma~\ref{lem:kern}, we want to apply Lemma A.1 of \cite{ss11}. It states that if two quads differ only at one side by some $\zeta$, then the probability of the  symmetric difference of the corresponding crossing events is small. More precisely, a slightly simplified version reads as follows in our notation. 
\begin{quote} \textit{
 Let $d>0$. There exists a positive function $\Delta(\zeta)$ such that $\Delta(\zeta)\to0$ as $\zeta\to0$ and the following estimates hold. If two quads $q,q'$ of diameter at least $d$ satisfy for some $\zeta<d/2$
 \begin{enumerate}
   \item[(i)] $[\ldots]$ or
   \item[(ii)] $\overline{q'}\subset\overline{q}$, $\del_0q'=\del_0q$, $\del_1q'\subset\del_1q$, $\del_3q'\subset\del_3q$ and each point on $\del_2q'$ can be connected to $\del_2q$ by a path in $\overline{q}$ of diameter at most $\zeta$, or
   \item[(iii)] $\overline{q'}\subset\overline{q}$, $\del_0q'\subset\del_0q$, $\del_1q'=\del_1q$, $\del_2q'\subset\del_2q$ and and each point on $\del_3q'$ can be connected to $\del_3q$ by a path in $\overline{q}$ of diameter at most $\zeta$,
 \end{enumerate}
 then for all $\e<\zeta$
 $$ P_\e\big[\hc{q}\triangle\hc{q'}\big] \,\le\,\Delta(\zeta) \,.$$
 }
\end{quote} 
For the sake of completeness, we shortly outline how one can prove that. Let two quads $q,q'$ satisfy condition (iii). If $\hc{q}\triangle\hc{q'}$ happens, there exists a yellow vertical crossing of $q$ and two blue arms from a disk of radius $\zeta$ to $\del_0q'$ and $\del_2q'$. If we condition on the left-most yellow vertical crossing, percolation on the right of it is still unbiased. Therefore we can apply RSW, yielding that the probability of an arm from a disk of radius $\zeta$ to $\del_2q'$ tends to $0$ as $\zeta\to0$, as desired. The details are properly written up in \cite{ss11}.
 
\begin{proof}[Proof of Lemma~\ref{lem:kern}.] 
 First we claim that for each $\ep>0$ there exists an $n_0\in\N$ such that for all $n\ge n_0$ the following holds:
 \begin{itemize}
  \item $|q_n(i,j)-q(i,j)|<\ep$ for each $(i,j)\in\{0,1\}^2$
  \item for any $z\in\del_iq$ there exist $z_n\in\del_iq_n$ with $|z-z_n|<\ep$, $i\in\{0,1,2,3\}$ and
  \item $\overline{q}\setminus U_\ep(\del q) \subset \overline{q_n}$
 \end{itemize}
 Note the uniformity and that $z$ and $z_n$ belong to the same side. Indeed, the first item is obvious from the kernel convergence. The second item can be fulfilled by covering $\del_iq$ with finitely many balls of radius $\ep/2$ (Condition (\ref{eq:regin}) with $\ep/2$ ensures that the $z_n$'s belong to the correct side). Finally, using compactness, a finite sub-cover of the covering of $\overline{q}\setminus U_\ep(\del q)$ by the neighbourhoods used in the definition of the kernel convergence yields the third item.
 
 Let $\ep>0$. We will specify $\ep$ depending on $\rho$ later on. Let $n\ge n_0$, where $n_0$ is associated to $\ep$  such that the claim and condition (\ref{eq:regin}) hold with this $n_0$. We need a further scale $\tep=\ep^\al\gg\ep$ for some $\al>0$ specified below. For $i\in\{0,1,2,3\}$, let $u_i^\tep$ be a closed curve, homeomorphic to a circle, around $\del_iq$, which stays between the $\tep$- and the $2\tep$-neighbourhood of $\del_iq$. We try to avoid that some of the $u_i^\tep$ intersect each other outside the $2\tep$-neighbourhoods of the quad-corners. If this is not possible (for example, when $q$ contains a slit), we treat the affected regions as different.

 We label the corners of the quad $q$ with $a=q(0,1)$, $b=q(0,0)$, $c=q(1,0)$ and $d=q(1,1)$. Now we define some points on the curves $u_i^\tep$ near the corners. Starting at some point of $u_0^\tep$ near $b$ and moving along $u_0^\tep$ outside $q$ (i.e.\ in counter-clockwise direction), let $a_b$ the first hit point of $u_0^\tep\cap u_3^\tep$. Similarly, let $a_d$ the first hit point of $u_0^\tep\cap u_3^\tep$ starting near $d$ and moving along $u_3^\tep$ outside $q$ (i.e.\ now in clockwise direction). Analogously we define the points $b_a$, $b_c$, $c_b$, $c_d$, $d_c$ and $d_a$. The notation should be interpreted as follows: a point $e_f$ (with $e,f\in\{a,b,c,d\}$) is near to the corner $e$, but on the way to $f$ on the curve $u^\tep$ outside $q$. These definitions are illustrated in Figure~\ref{fig:quad22}.
 \begin{figure}[ht]
 \begin{center}
   \begin{overpic}[width=0.7\textwidth]{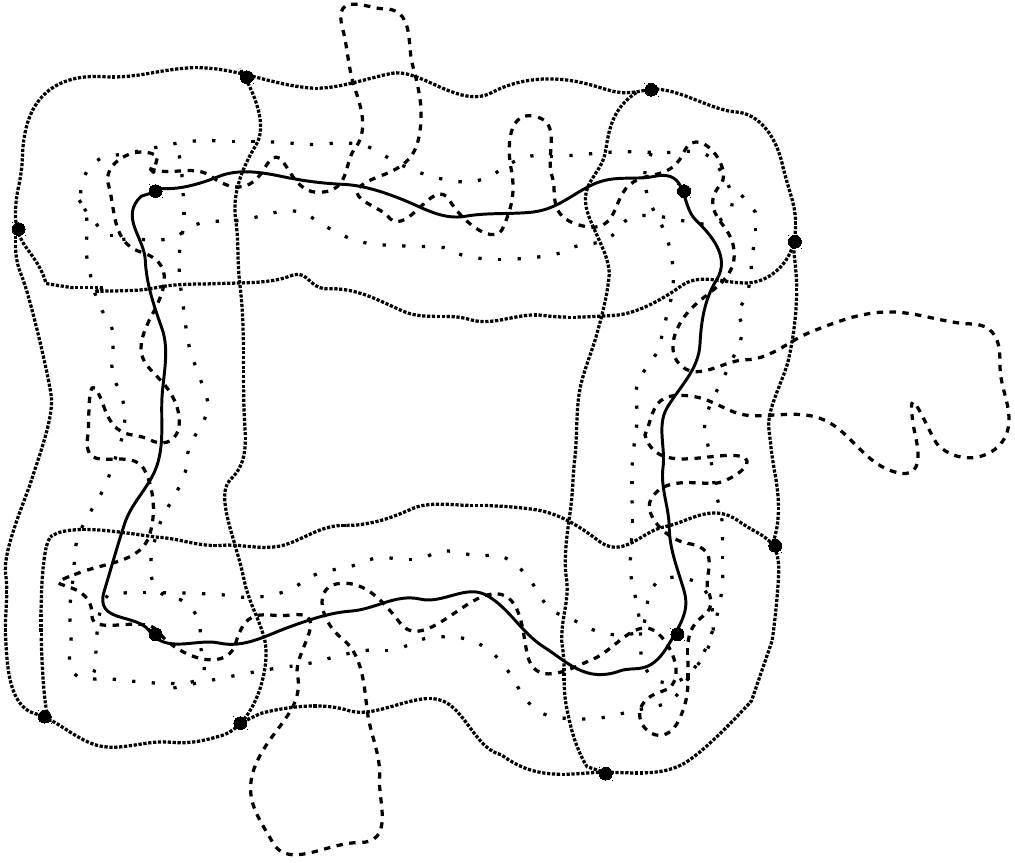}
    \put(13,66){$a$}
    \put(-2,63){$a_b$}
    \put(24,79){$a_d$}
    \put(13,20){$b$}
    \put(4,11){$b_a$}
    \put(22.5,10.5){$b_c$}
    \put(65,23){$c$}
    \put(59,6){$c_b$}
    \put(78,28){$c_d$}
    \put(67,67){$d$}
    \put(79,61.5){$d_c$}
    \put(64,77){$d_a$}
    \put(0,44){$u_0^\tep$}
    \put(43,10){$u_1^\tep$}
    \put(77,38){$u_2^\tep$}
    \put(46,78){$u_3^\tep$}
   \end{overpic}
 \end{center}
 \caption{Quads $q$ (solid) and $q_n$ (dashed) with the $\ep$-neighbourhood of $q$ (wide dotted), the curves $u_i^\tep$ (fine dotted) of $q$ and the marked points \label{fig:quad22}}
\end{figure}
  
 We use these points and curves to define the following quads. They are schematically drawn in Figure~\ref{fig:quads} below. We define the quads by giving the corners and the sides. We do not specify the parametrisations, since they are irrelevant. Let $q^0$ be defined by the corners $a_d$, $b_c$, $c_b$ and $d_a$ with the following sides: Let $\del_0q^0$ consist of the part of $u_0^\tep$ between $a_d$ and $b_c$ which intersects $\bar{q}$. The side $\del_1q^0$ consists of the part of $u_1^\tep$ between $b_c$ and $c_b$ which stays outside $\bar{q}$. The side $\del_2q^0$ shall consist of the part of $u_2^\tep$ between $c_b$ and $d_a$ which intersects $\bar{q}$. And finally, let $\del_3q^0$ consist of the part of $u_3^\tep$ between $d_a$ and $a_d$ which stays outside $\bar{q}$. We abbreviate this definition by
 \begin{eqnarray*}
  q^0 &=& \big[a_d \text{ --i-- }\, b_c \text{ --o-- } c_b \text{ --i-- } d_a \text{ --o--}\big]
 \end{eqnarray*}
 Here we give the corners and the sides between them. An  ``--o--''  indicates that the corresponding side consists of the part of $u_i^\tep$ between the given corners which stays \textbf{o}utside $\bar{q}$, whereas an ``--i--'' denotes that the part of $u_i^\tep$ which \textbf{i}ntersects $\bar{q}$ is used. With this notation we further define the quads
 \begin{eqnarray*}
  q^1 &=& \big[a_d \text{ --i-- } b_c \text{ --o-- } c_d \text{ --o-- } d_c \text{ --o--}\big] \\
  q^2 &=& \big[a_b \text{ --o-- } b_a \text{ --o-- } c_d \text{ --o-- } d_c \text{ --o--}\big] \\
  q^3 &=& \big[a_b \text{ --o-- } b_a \text{ --o-- } c_d \text{ --o-- } d_c \text{ --i--}\big] \\
  q^4 &=& \big[a_b \text{ --o-- } b_a \text{ --i-- } c_d \text{ --o-- } d_c \text{ --i--}\big]  
 \end{eqnarray*}
 which are schematically drawn in Figure~\ref{fig:quads}.
 \begin{figure}[ht]
  \begin{picture}(100,20)(-5,-4)
   \thinlines
   \put(0,12){\line(1,0){16}}
   \put(0,9){\line(1,0){16}}
   \put(0,3){\line(1,0){16}}
   \put(0,0){\line(1,0){16}}
   \put(0,0){\line(0,1){12}}
   \put(3,0){\line(0,1){12}}
   \put(13,0){\line(0,1){12}}
   \put(16,0){\line(0,1){12}}
   \put(0,9){\circle{1.3}}
   \put(0,3){\circle{1.3}}
   \put(3,12){\circle*{1.3}}
   \put(3,0){\circle*{1.3}}
   \put(13,12){\circle*{1.3}}
   \put(13,0){\circle*{1.3}}
   \put(16,9){\circle{1.3}}
   \put(16,3){\circle{1.3}}   
   \put(2,13.8){$a_d$}
   \put(12,13.8){$d_a$}
   \put(2,-3.3){$b_c$}
   \put(12,-3.3){$c_b$}
   \put(-3.8,8.5){$a_b$}
   \put(-3.8,2.5){$b_a$}
   \put(17.1,8.5){$d_c$}
   \put(17.1,2.5){$c_d$}
   
   \put(7,-4.5){$q^0$}
   \put(29,-4.5){$q^1$}
   \put(49,-4.5){$q^2$}
   \put(69,-4.5){$q^3$}
   \put(89,-4.5){$q^4$}
   
   \put(22,12){\line(1,0){16}}
   \put(22,9){\line(1,0){16}}
   \put(22,3){\line(1,0){16}}
   \put(22,0){\line(1,0){16}}
   \put(22,0){\line(0,1){12}}
   \put(25,0){\line(0,1){12}}
   \put(35,0){\line(0,1){12}}
   \put(38,0){\line(0,1){12}}
   
   \put(42,12){\line(1,0){16}}
   \put(42,9){\line(1,0){16}}
   \put(42,3){\line(1,0){16}}
   \put(42,0){\line(1,0){16}}
   \put(42,0){\line(0,1){12}}
   \put(45,0){\line(0,1){12}}
   \put(55,0){\line(0,1){12}}
   \put(58,0){\line(0,1){12}}
   
   \put(62,12){\line(1,0){16}}
   \put(62,9){\line(1,0){16}}
   \put(62,3){\line(1,0){16}}
   \put(62,0){\line(1,0){16}}
   \put(62,0){\line(0,1){12}}
   \put(65,0){\line(0,1){12}}
   \put(75,0){\line(0,1){12}}
   \put(78,0){\line(0,1){12}}
   
   \put(82,12){\line(1,0){16}}
   \put(82,9){\line(1,0){16}}
   \put(82,3){\line(1,0){16}}
   \put(82,0){\line(1,0){16}}
   \put(82,0){\line(0,1){12}}
   \put(85,0){\line(0,1){12}}
   \put(95,0){\line(0,1){12}}
   \put(98,0){\line(0,1){12}}

   \linethickness{0.5mm}
   \put(3,0){\line(0,1){12}}
   \put(13,0){\line(0,1){12}}
   \put(3,0){\line(1,0){10}}
   \put(3,12){\line(1,0){10}}
   
   \put(25,0){\line(1,0){13}}
   \put(25,12){\line(1,0){13}}
   \put(25,0){\line(0,1){12}}
   \put(38,0){\line(0,1){12}}
   
   \put(42,0){\line(1,0){16}}
   \put(42,12){\line(1,0){16}}
   \put(42,0){\line(0,1){12}}
   \put(58,0){\line(0,1){12}}
      
   \put(62,0){\line(1,0){16}}
   \put(62,9){\line(1,0){16}}
   \put(62,0){\line(0,1){9}}
   \put(78,0){\line(0,1){9}} 
      
   \put(82,3){\line(1,0){16}}
   \put(82,3){\line(0,1){6}}
   \put(82,9){\line(1,0){16}}
   \put(98,3){\line(0,1){6}}
   
   \put(25,0){\circle*{1.3}}
   \put(25,12){\circle*{1.3}}
   \put(38,3){\circle*{1.3}}
   \put(38,9){\circle*{1.3}}
   
   \put(42,3){\circle*{1.3}}
   \put(42,9){\circle*{1.3}}
   \put(58,3){\circle*{1.3}}
   \put(58,9){\circle*{1.3}}
   
   \put(62,3){\circle*{1.3}}
   \put(62,9){\circle*{1.3}}
   \put(78,3){\circle*{1.3}}
   \put(78,9){\circle*{1.3}}
   
   \put(82,3){\circle*{1.3}}
   \put(82,9){\circle*{1.3}}
   \put(98,3){\circle*{1.3}}
   \put(98,9){\circle*{1.3}}
  \end{picture}
  \caption{Schematic drawing of the quads $q^0$, $q^1$, $q^2$, $q^3$ and $q^4$ \label{fig:quads}} 
 \end{figure}
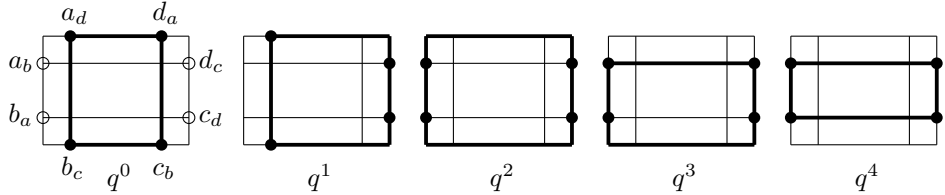

 Then
 $$ \hc{q^0}\triangle\hc{q^4} \,\subseteq\, \bigcup_{i=0}^3 \hc{q^i}\triangle\hc{q^{i+1}} $$
 and each pair $(q^i,q^{i+1})$, $i=0,1,2,3$, satisfies condition (ii) or (iii) of Lemma A.1 in \cite{ss11} as cited above with $\zeta=4\tep$ (for $i=1,3$, the sides $\del_0$ and $\del_2$ as well as the sides $\del_1$ and $\del_3$ have to be interchanged). We conclude for $\e<\tep$
 $$ P_\e\big[\hc{q^0}\triangle\hc{q^4}\big] \,\le\, f(\tep) $$
 for some function $f$ with $f(\tep)\to0$ as $\tep\to0$.
 
 Now we want to link the previous observation to the event of interest. By the construction of the quads $q^0$ and $q^4$, every crossing of $q^4$ contains a crossing of $q$ and every crossing of $q$ contains one of $q^0$, i.e.\ $\hc{q^4}\subseteq\hc{q}\subseteq\hc{q^0}$. This statement is only almost true, if we consider $q_n$ instead of $q$, since $q_n$ may have excursions outside $U_\tep(\bar{q})$, i.e.\ in general $\hc{q^4}\not\subseteq\hc{q_n}\not\subseteq\hc{q^0}$. But as $\del q_n$ will come $4\ep$-close to itself after leaving $U_\ep(\del q)$, we can control the events $\hc{q_n}\setminus\hc{q^0}$ and $\hc{q^4}\setminus\hc{q_n}$, as follows. Mind that we now use the $\ep$-neighbourhoods. We will need the distance between $\ep$ and $\tep$ to control some arm events below.
 
 Let us cover $U_\ep(\del q)$ with finitely many balls of radius $\ep$ centred at points $z_j$, $j\in J$. We need at most $c\ep^{-2}$ many balls, with some numerical constant $c>0$. Assume that there exists $x\in\del_iq_n\setminus U_\ep(\overline{q})$, i.e.\ some part of $\del_iq_n$ is far away from $\overline{q}$. Then we claim that there exist $j\in J$ and $x_1,x_2\in U_{2\ep}(z_j)\cap\del_iq_n$ such that $x$ lies in between $x_1$ and $x_2$ on $\del_iq_n$. Indeed, let $\del_iq_n|^1$ respectively $\del_iq_n|^2$ be the part of $\del_iq_n\cap U_\ep(\del_iq)$ before respectively after $x$, and let $U^k:=U_\ep(\del_iq_n|^k)$, $k\in\{1,2\}$. Then $\del_iq \subseteq U^1\cup U^2$, since for all $z\in\del_iq$ there exists $z_n\in\del_iq_n$ with $|z_n-z|<\ep$ (second item above), i.e.\ $z_n\in\del_iq_n|^1\cup\del_iq_n|^2$. Thus $U^1\cap U^2\ne\emptyset$. Therefore there exists $j\in J$ with $U_\ep(z_j)\cap U^1\cap U^2\ne\emptyset$. We conclude that there are $ y_k\in U_\ep(z_j)\cap U^k$ and $x_k\in\del_iq_n|^k$ with 
$|y_k-x_k|<\ep$, which implies $|x_k-z_j|<2\ep$, $k\in\{1,2\}$, as claimed.
 
 Now if $\hc{q_n}\setminus\hc{q^0}$ happens, each crossing of $q_n$ must leave $q^0$ between $b_c$ and $c_b$ or between $d_a$ and $a_d$. By the geometry of $q_n$, explained in the claim above, the crossing is forced to re-enter some ball $B_{2\ep}(z_j)$ with $z_j\in\overline{q^0}$ after leaving $q_0$ (at least $\tep$ away from $\del q$). Furthermore, it must reach the paths whose existence is postulated in condition (\ref{eq:regin}) for $i=0,2$. Thus it reaches the $\ep$-neighbourhoods of $\del_0q$ and $\del_2q$, which are of distance at least $\tep-2\ep$ of the ball. Thus the crossing induces four blue arms inside the annulus centred at $z_j$ with radii $2\ep$ and $\tep-2\ep$. Moreover, there must exist two yellow arms inside this annulus preventing $q^0$ being crossed. The event $\hc{q^4}\setminus\hc{q_n}$ is treated similarly, or by duality, considering a yellow vertical crossing of $q_n$ which does not induce a vertical crossing of $q^4$. Therefore, we conclude
 $$ \big(\hc{q_n}\setminus\hc{q^0}\big) \cup \big(\hc{q^4}\setminus\hc{q_n}\big) \subseteq \bigcup_{j\in J}A_6(z_j,2\ep,\tep-2\ep)\,, $$
 where $A_6(z,\varrho,R)$ denotes the event that there exist six arms, not all of them of the same colour, inside the annulus centred at $z$ of radii $\varrho$ and $R$. By standard RSW techniques, we have for $\e<\varrho$
 $$ P_\e\big[A_6(z,\varrho,R)\big] \le (\varrho/R)^{2+\nu} $$
 for some $\nu>0$ (i.e.\ the polychromatic 6-arm-exponent is larger than $2$). Recall that $\tep=\ep^\al$ for some $\al>0$. Therefore $\tep-2\ep\ge\h\ep^\al$ for small $\al$. It follows that
 $$ P_\e\Big[\bigcup_{j\in J}A_6(z_j,2\ep,\tep-2\ep)\Big] \,\le\, c\ep^{-2}\cdot (\tfrac{2\ep}{\tep-2\ep})^{2+\nu} \,\le\,c \ep^{\nu-2\al-\nu\al}\,,$$
 which tends to zero as $\ep\to0$ for sufficiently small $\al>0$.
 
 Summing up, we have
 $$ \hc{q_n}\triangle\hc{q} \;\subseteq\; \big(\hc{q^0}\triangle\hc{q^4}\big)\,\cup\, \bigcup_{j\in J}A_6(z_j,2\ep,\tep-2\ep)$$
 and therefore for $\e<\ep$
 $$ P_\e\big[\hc{q_n}\triangle\hc{q}\big] \,\le\,  \tilde{f}(\ep) $$
 for some function $\tilde{f}$ with $\tilde{f}(\ep)\to0$ as $\ep\to0$.
 
 To conclude the proof, given $\rho>0$, we choose $\ep>0$ such that $\tilde{f}(\ep)\le\rho$, $\e_0=\2\ep$ and $n_0\in\N$ associated to $\ep$ as above.
\end{proof}
 
\begin{rem}
 Just convergence in the kernel sense is not enough, as the following counterexample shows. Let $q$ be the quad $q:[0,1]^2\to[0,1]^2$, $q(z)=z$, and let quads $q_n$ be given by
 $$ \overline{q_n}:=[0,1]^2\setminus(\tfrac{1}{n},1]\times(\tfrac{1}{n},\tfrac{2}{n}), \qquad \del_iq_n=\del_iq,\quad i\in\{0,1,3\},$$
 and $\del_2q_n$ consisting of the boundary part between $(1,0)$ and $(1,1)$.
 
 Then $q_n$ converge in the kernel sense to $q$. But if $P_\e^{0.5}$ denotes the critical percolation measure, then
 $$ P_\e^{0.5}(\hc{q}) = \tfrac{1}{2},$$
 whereas
 $$ P_\e^{0.5}(\hc{q_n}) \to 1 $$ 
 as $\e\to0$ with $\e\asymp 1/n$. RSW yields the last assertion considering concentric (quarter-)annuli around $(0,0)$ with radii $2/n\cdot2^k$ and $2/n\cdot2^{k+1}$, $0\le k \le c\log n$. Thus a condition like (\ref{eq:regin}) is necessary for Lemma~\ref{lem:kern}.
\end{rem}

\subsection{One Mesoscopic Triangle} \label{ssec:onetriangle}

Now we begin with the proof of Theorem~\ref{mainthm}. Using the same basic ideas, we more or less follow the set-up of the proof of \cite[Proposition 6]{nw9}. But now and then we take slightly different approaches for various reasons. In particular, we work longer with the discrete exploration paths.

Let us fix a sequence $(\e_k)_{k\in\N}$ fulfilling the hypothesis of the theorem. As explained before stating the theorem, such a sequence does exist. In the following, we omit the subscript $k$ of $\e_k$ and simply write $\e$ for an element of the chosen sequence. The limit $\e\to0$ is always to be understood along the sequence $(\e_k)_k$.
 
First we need some definitions. Consider a small equilateral triangle $t$ of size $\delta$ which is contained in $H_r$. The size $\delta$ shall be some mesoscopic size, intermediate between the mesh size $\e$ and the size $r$ of the domain. 

According to Figure~\ref{fig:1}, we define the open rectangle $r=r(t)$ to be the whole dotted area, the closed segments $l=l(t)$, $m=m(t)$ and $b=b(t)$ to be the lower, the middle respectively the upper ``line" of $r$ as well as the smaller triangle $t'$ just like in \cite[p.\ 814]{nw9}, to which we also refer for exact definitions. But note that the exact definitions are not that important for the proof.
\begin{figure}[ht]
\begin{minipage}{0.47\textwidth} \begin{center}
\begin{picture}(50,44)
 \thicklines
 \put(0,0){\line(1,0){50}}
 \put(0,0){\line(500,866){25}}
 \put(50,0){\line(-500,866){25}}
 \put(22,6){\line(1,0){6}}
 \put(22,12){\line(1,0){6}}
 \thinlines
 \put(22,0){\line(0,1){12}}
 \put(28,0){\line(0,1){12}}
 \put(5,6){\line(1,0){40}} 
 \put(5,6){\line(500,866){20}} 
 \put(45,6){\line(-500,866){20}}
 \multiput(22.5,0.5)(0,0.5){23}{\multiput(0,0)(0.5,0){11}{\circle*{0.1}}}
 \put(3,1){$t$}
 \put(8,7){$t'$}
 \put(19,8){$r$}	\put(20.8,8.7){\vector(1,0){5}}
 \put(29.2,1){$l$}	\put(29,1.8){\vector(-3,-1){4.3}}
 \put(29.2,7){$m$}	\put(29,7.8){\vector(-3,-1){4.3}}
 \put(29.2,13){$b$}	\put(29,13.8){\vector(-3,-1){4.3}}
\end{picture}
\caption{Definition of $r,m,b,t'$ \label{fig:1}} \end{center}
\end{minipage}
\hfill
\begin{minipage}{0.47\textwidth} \begin{center}
\begin{picture}(50,44)
 \thicklines
 \put(0,0){\line(1,0){50}}
 \put(0,0){\line(500,866){25}}
 \put(50,0){\line(-500,866){25}} 
 \thinlines
 \put(22,12){\line(1,0){6}}
 \put(22,0){\line(0,1){12}}
 \put(28,0){\line(0,1){12}}
 \put(5,6){\line(1,0){40}} 
 \put(5,6){\line(500,866){20}} 
 \put(45,6){\line(-500,866){20}} 
 \put(3,1){$t$}
 \put(45,6){\circle*{1}} \put(44,2.5){$a^0$}
 \put(23.5,6){\circle*{1}} \put(16.5,2.2){$a^2$} \put(19.6,3.8){\vector(2,1){3.4}}
 \put(26.5,6){\circle*{1}} \put(30.5,2.2){$a^1$} \put(30.4,3.8){\vector(-2,1){3.4}}
 \put(25,12){\circle*{1}} \put(22,16.5){$\g(\s)$} \put(25,15.8){\vector(0,-1){3.1}}
 \qbezier(23.5,6)(24,12)(24.5,6)
 \qbezier(24.5,6)(25,0)(25.5,6)
 \qbezier(25.5,6)(26,9)(25,12)
 \qbezier(26.5,6)(28,10)(25,12)
 \put(35,6.5){$\del^0$}
 \put(29.5,25){$\del^1$}
 \put(17.8,25){$\del^1$}
 \put(13,6.5){$\del^1$}
 \put(18,11){$\del^2$} \put(20.2,11){\vector(2,-1){3.5}} 
 \put(31,11){$\del^3$} \put(30.8,11){\vector(-2,-1){3.5}} 
\end{picture}
\caption{Definition of $\g(\s),a^i,\del^i$ \label{fig:2}} \end{center}
\end{minipage}
\end{figure}

Given a curve $\g\in\mathcal{S}_r$, let $\s=\s(t,\g)$ be its first hitting time of $t\setminus r$ or the first hitting time of $l$ after hitting $m$, whatever happens first. If $\g(\s)\in b$ we say that the triangle $t$ is \emph{good} for the curve $\g$. Let us denote this event by $G(t,\g)$. 

If a triangle $t$ is good for a curve $\g$, we define the following. Let $a^0=a^0(t)$ be the right corner of $t'$, $a^1=a^1(t,\g)$ be the right-most point and $a^2=a^2(t,\g)$ the left-most point on $m\cap\g[0,\s]$. We further define the set $d=d(t,\g)$ as the union of the  connected component of $t'\setminus\g[0,\s]$ which has the top boundaries of $t'$ on its boundary, and the components of $r\setminus(t'\cup\g[0,\s])$ which touch the former component between $a^2$ and $a^1$. Then $d(t,\g)$ is a simply connected set whose boundary consists of $\del t'\setminus (a^2,a^1)$ and some points of $\g[0,\s]$. We partition its boundary as follows. Let $\del^0(t,\g)$ be the part of the boundary between $a^1$ and $a^0$, $\del^1(t,\g)$ the part between $a^0$ and $a^2$, $\del^2(t,\g)$ the prime ends between $a^2$ and $\g(\s)$ and finally $\del^3(t,\g)$ the prime ends between $\g(\s)$ and $a^1$ (all in counter-clockwise direction). With these boundary parts, one can consider $d(t,\g)$ as some quad. These definitions are 
illustrated in Figure~\ref{fig:2}. Note that they depend on the curve $\g$ only up to time $\s$. 

Now we define the event $\VG(\g,t)$ that the triangle $t$ is \emph{very good} for $\g$: it holds if $t$ is good for $\g$ and if, after time $\s$, $\g$ hits $\del^0(t,\g)$ before $\del^1(t,\g)$. Note that all these definitions are analogous to \cite[p.\ 814]{nw9}. We only decreased the indices of $\del^i$ to be consistent with the quad notation introduced above. We further enlarged the set $d$ a little bit to ensure the observation in the next paragraph.

When we apply these definitions to the discrete exploration paths $\g_\e$, we adjust them to the discrete setting: All sets shall be unions of hexagons, a point is considered as a hexagon and $\g_\e[0,\s]$ shall be the exploration path up to time $\s$ together with the touching blue and yellow hexagons. If $t$ is good for $\g_\e$, the event $\VG(t,\g_\e)$ is equivalent to the existence of a blue crossing from $\del^0$ to $\del^2$ inside $d(t,\g_\e)$, i.e.\ to $\hc{d(t,\g_\e)}$. This observation is ensured by the slight enlargement of $d$. Without it, the exploration path could bypass some blue crossings using hexagons below $m$.

In the following lemma we estimate the difference between the $P^\la_\e$- and the $P^\mu_\e$-probability of the event that the exploration path is very good for some triangle conditioned on the path up to time $\s$. We state (and use) this lemma only in the discrete setting. By this means, we avoid having to consider a limit simultaneously in the event and in the conditioning -- which is tricky. Let us recall that $\delta$ is the mesoscopic size of the triangle $t$, that $\g_\e:\Om_\e\to\mathcal{S}_r$ is the exploration path and that $\af$ is the exponent bounding the probability of a four arm event from above.
\begin{lem} \label{lem:evg-disc} 
 The following estimate holds for all very small $\beta>0$ and for all small enough $\delta$ and $\e\ll\delta$ on the event $G(t,\g_\e)$:
 $$ P^\la_\e \big[\VG(t,\g_\e) \mid \g_\e[0,\s]\big] \,-\, P^\mu_\e \big[\VG(t,\g_\e) \mid \g_\e[0,\s]\big]
 \,\ge\, \delta^{2-\af+\beta}\,.$$
\end{lem}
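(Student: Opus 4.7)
The plan is to reduce $\VG(t,\g_\e)$, conditional on $\g_\e[0,\s]$, to a crossing event in the random quad $d=d(t,\g_\e)$, then apply the Margulis--Russo formula in the parameter $\io$, and finally give a lower bound on the expected number of pivotals of order $(\delta/\e)^2\,\al_4^\e(\delta)$.

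First I would invoke the observation recorded just before the statement of the lemma: on $G(t,\g_\e)$ the event $\VG(t,\g_\e)$ coincides with $\hc{d(t,\g_\e)}$. The interior hexagons of $d$ are not among those revealed by the exploration up to time $\s$, and by the domain Markov property of $\g_\e$ they remain, conditionally on $\g_\e[0,\s]$, iid Bernoulli of parameter $p^\io$. Hence
$$ P^\io_\e\big[\VG(t,\g_\e) \,\big|\, \g_\e[0,\s]\big] \;=\; \pi^\io_\e\big(d(t,\g_\e)\big), $$
where $\pi^\io_\e(d)$ denotes the unconditional probability of a blue crossing of the fixed quad $d$ from $\del^0$ to $\del^2$ under $p^\io$-percolation.

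Next, applied to the increasing event $\hc{d}$, the Margulis--Russo formula combined with the chain rule and $dp^\io/d\io_\e=\e^2/\al_4^\e(1)$ yields
$$ \pi^\la_\e(d)-\pi^\mu_\e(d) \;=\; \frac{\e^2}{\al_4^\e(1)}\int_{\mu_\e}^{\la_\e} E^\io_\e\big[N(d)\big]\,d\io, $$
where $N(d)$ counts the hexagons pivotal for $\hc{d}$. The claim is thus reduced to a lower bound on $E^\io_\e[N(d)]$.

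On $G(t,\g_\e)$ the quad $d$ has diameter of order $\delta$ and contains a macroscopic sub-region of the same order on which RSW, together with the separation and stability results of \cite{n7}, gives crossing probabilities bounded away from $0$ and $1$ uniformly in $\e$, in $\io\in[\mu,\la]$, and in the geometry of $\g_\e[0,\s]$. A standard four-arm and gluing argument then shows that for every hexagon $h$ at distance of order $\delta$ from $\del d$,
$$ P^\io_\e\big[h \text{ pivotal for } \hc{d}\big] \;\asymp\; \al_4^\e(\delta). $$
Summing over the $\asymp(\delta/\e)^2$ such hexagons gives $E^\io_\e[N(d)]\gtrsim(\delta/\e)^2\,\al_4^\e(\delta)$. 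Plugging into the Russo identity and invoking quasi-multiplicativity together with the hypothesis $\al_4^\e(\delta,1)\le c'\delta^\af$,
$$ \pi^\la_\e(d)-\pi^\mu_\e(d) \;\gtrsim\; (\la_\e-\mu_\e)\,\delta^2\,\frac{\al_4^\e(\delta)}{\al_4^\e(1)} \;\gtrsim\; (\la-\mu)\,\delta^{2-\af}, $$
which is $\ge \delta^{2-\af+\beta}$ for any prescribed $\beta>0$ once $\delta$ is small enough (since $\la-\mu>0$ is a constant and $\delta^\beta\to0$).

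The main obstacle is the pivotal-density lower bound uniformly over the random quad $d(t,\g_\e)$: part of $\del d$ lies on the fractal exploration path, the parameters $p^\io$ are only near-critical (not exactly $1/2$), and $\delta$ is only mesoscopic. Keeping the crossing probability of $d$ and the four-arm / pivotal probabilities uniform in the geometry of $\g_\e[0,\s]$ on $G(t,\g_\e)$ is precisely what forces the use of RSW and the separation / stability machinery of \cite{n7}.
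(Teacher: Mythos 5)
Your proposal is correct and follows essentially the same route as the paper: the reduction of $\VG(t,\g_\e)$ on $G(t,\g_\e)$ to the crossing event $\hc{d(t,\g_\e)}$ with conditionally unbiased percolation inside $d$, the uniform pivotal lower bound of order $\al_4^\e(\delta)$ on a bulk region of $\asymp(\delta/\e)^2$ hexagons (via RSW, separation and near-critical stability from \cite{n7}), the identity $p^\la-p^\mu=(\la_\e-\mu_\e)\,\e^2/\al_4^\e(1)$, and quasi-multiplicativity with $\al_4^\e(\delta,1)\le c'\delta^{\af}$. The only difference is one of bookkeeping: you integrate the Margulis--Russo formula over the parameter interval, whereas the paper obtains the same bound through the standard monotone coupling, observing that the crossing of $d$ appears at a unique switching hexagon and level, which is an equivalent device.
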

Here and in the following, $\e\ll\delta$ means for all $\e<\e_0$ where $\e_0$ depends on $\delta$. In fact, $\e_0=c\delta$ for some universal constant $c>0$ will be enough.
\begin{proof}
 We follow the corresponding part of the proof of Nolin and Werner, see \cite[p.\ 816]{nw9}. Let $\e>0$ be small. We couple the percolation configurations in a monotone manner such that the set of blue hexagons increases. More precisely, let $\hat{P}$ be the uniform measure on $\hat{\Om}_\e:=[0,1]^{H_r^\e}$, and for $p\in[0,1]$ let the random variable $\om(p): \hat{\Om}_\e\to\Om_\e$ be defined by $(\om(p)(\hat{\om}))_x=\text{blue}$ iff $\hat{\om}_x\le p$ for $x\in H_r^\e$ and $\hat{\om}=(\hat{\om}_x)_x \in \hat{\Om}_\e$.    
   
 Given $\g_\e[0,\s]$ and $G(t,\g_\e)$, the event $\VG(t,\g_\e)$ only depends on the hexagons inside $d(t,\g_\e)$ since it is equivalent to $\hc{d(t,\g_\e)}$. Moreover, given $\g_\e[0,\s]$, percolation inside $d(t,\g_\e)$ is still unbiased, i.e.\ we may use all percolation techniques there, for instance RSW and the separation lemmas.
 
 Suppose now that $t$ is good for $\g_\e$. We conclude 
 $$ P^\la_\e\big[\VG(t,\g_\e) \mid \g_\e[0,\s]\big] - P^\mu_\e\big[\VG(t,\g_\e) \mid \g_\e[0,\s]\big] = \hat{P}[E_\e]\,,$$
 where $E_\e$ is the event that there exists a blue crossing from $\del^0(t,\g_\e)$ to $\del^2(t,\g_\e)$ in $d(t,\g_\e)$ for $\om(p^\la)$, but not for $\om(p^\mu)$. 
 
 In order to prove the proposed estimate, we can restrict ourselves to the following sub-event of $E_\e$. For a hexagon $x$ inside a deterministic rhombus of size $0.1\delta$ inside $d(t,\g_\e)$ (away from the boundary) and for $p\in[p^\mu,p^\la]$, let us consider the event  that $x$ is pivotal for the existence of the desired crossing. In that case there are four arms of alternating colours from $x$ to the boundary of $d(t,\g_\e)$. Its probability is bounded from below by $C\,\al_4^\e(\delta)$ for some constant $C>0$, uniformly in $x$, $p$ and $\e\ll\delta$. This is a consequence of the separation lemmas, RSW and the uniform estimates for arm events, which are still valid in the nearcritical regime (cf.\ e.g.\ \cite{n7}). As the crossing event is increasing, the event that $x$ is pivotal and switched from yellow to blue at $p$ (i.e.\ $\hat{\om}_x=p$), can happen only for one hexagon $x$ and for one $p$. 
 Therefore, the $\hat{P}$-probability that this occurs for some $x$ in the rhombus and for some $p\in[p^\mu,p^\la]$, which is clearly a sub-event of $E_\e$, is larger than 
 $$ C  \al_4^\e(\delta) \, (\tfrac{0.1\delta}{\e})^2 \, (p^\la-p^\mu) \,.$$
 Using
 \begin{equation} \label{eq:pla-pmu}
  p^\la-p^\mu = \2+\la_\e \e^2 /\al_4^\e(1) - \2 - \mu_\e \e^2 /\al_4^\e(1) = (\la_\e-\mu_\e)\,\e^2/\al_4^\e(1)
 \end{equation}
 we estimate
 \begin{eqnarray*}
  \hat{P}[E_\e] 
  &\ge&  C\, \al_4^\e(\delta) (\tfrac{0.1\delta}{\e})^2 \, (p^\la-p^\mu) \\
  &\stackrel{(\ref{eq:pla-pmu})}{=}&
  	 C'\, \delta^2 \e^{-2}\,\al_4^\e(\delta)\,[\al_4^\e(1)]^{-1}\e^2(\la_\e-\mu_\e)  \\
  &\ge&	 C''\, \delta^2 [\al_4^\e(\delta,1)]^{-1} (\la-\mu+o(1))\\
  &\ge&	 \delta^{2-\af+\beta} \,,
 \end{eqnarray*}
 the latter if $\delta$ is small enough, depending on $\beta$, $C''$ and the $o(1)$-term. Quasi-multi\-plica\-tivity yields the last but one line. The lemma follows.
\end{proof}
\begin{rem}
 Using the ratio limit theorem \cite[Proposition 4.9.]{gps10} (stating $\al_4^\e(\delta)/ \al_4^\e(1)\to\delta^{-5/4}$) instead of quasi-multiplicativity, we could have concluded on the hexagonal lattice that
 $$ P^\la_\e \big[\VG(t,\g_\e) \mid \g_\e[0,\s]\big] \,-\, P^\mu_\e \big[\VG(t,\g_\e) \mid \g_\e[0,\s]\big]
 \,\ge\, C \delta^\frac{3}{4}$$
 for small enough $\delta$ and $\e\ll\delta$ on $G(t,\g_\e)$, for some constant $C>0$ independent of $\e$ and $\delta$.
\end{rem}
\begin{rem}
 Though the proof of Lemma~\ref{lem:evg-disc} is almost the same as the corresponding part of \cite{nw9}, it contains the main reason, why \cite[Proposition 6]{nw9} expands to Theorem~\ref{mainthm}: it is the quite trivial equation~(\ref{eq:pla-pmu}). This equation shows that the distance between two different nearcritical probabilities is -- up to constants -- the same as the distance between a nearcritical and the critical probability. In fact,
 $$ p^\la\,-\,p^\mu \quad\asymp\quad \frac{\e^2}{\al_4^\e(1)} \quad\asymp\quad p_\text{nearcritical}\,-\,p_\text{critical} $$
 as $\e\to0$.
\end{rem}

\subsection{Many Mesoscopic Triangles} \label{ssec:manytriangles}

We continue the proof of Theorem~\ref{mainthm} similar to \cite[p.\ 816]{nw9} by looking at a whole bunch of small triangles. Thereto let $\delta\gg\e>0$. Later on we will send $\e$ -- and finally even $\delta$ -- to zero, but in this subsection $\delta$ and $\e$ are fixed. Using a triangular grid of mesh size $4\delta$, we place a circle of radius $\delta$ at each site and put an equilateral triangle of size $\delta$ in its centre. This defines $N=N(\delta)\asymp\delta^{-2}$ deterministic triangles on the whole domain. We fix some very small $\beta>0$ and set $M=M(\delta):=\lfloor \delta^{-2+\at+\beta} \rfloor$, where $\at$ is the exponent bounding the two arm probability from below.

Given the discrete exploration path $\g_\e$, we assign each triangle $t$ its hitting time $\s(t,\g_\e)$ as defined at the beginning of the proof. If a triangle is not hit at all, we set $\s(t,\g_\e)=1$. We arrange the $N$ triangles in the order $t_1,\ldots,t_N$ such that $\s_1\le\s_2\le\ldots\le\s_N$ where $\s_k=\s(t_k,\g_\e)$. Note that these inequalities are strict unless $\s_k=\s_{k+1}=1$. We further introduce the $\s$-Algebras on $\Om_\e$
$$ \F_k:=\s(\g_\e[0,\s_{k+1}])\,,\quad k\in\{0,\ldots,N-1\} $$
and $\F_N=\F_{N+1}:=\s(\g_\e[0,1])$. Note the shift in the index and the very different meaning of the two letters $\s$ in that formula. Let us remark that we can already decide at time $\s_k$ whether the triangle $t_k$ is good or not, i.e.\ $G(t_k,\g_\e)\in\F_{k-1}$. Moreover, $\VG(t_k,\g_\e)\in\F_k$ since if $t_k$ is good, the status \emph{very good} is decided at the next hitting of the triangle's boundary and thus before hitting the next triangle at time $\s_{k+1}$. 

Instead of defining a random variable which resembles the quantity $Z$ of \cite[p.\ 817]{nw9} right now, we develop a discrete analogue. With that approach we can explicitly estimate some variances. To this end, we define the bounded random variables $\Om_\e\to\R$
$$ X^{\delta,\io}_{\e,n} := \sum_{k=1}^n \1_{G(t_k,\g_\e)}\big( \1_{\VG(t_k,\g_\e)}-P^\io_\e[\VG(t_k,\g_\e)\mid\F_{k-1}] \big) $$
for $n\in\{0,\ldots,N\}$ and $\io\in\{\mu,\la\}$. Moreover, $X^{\delta,\io}_{\e,N+1}:=X^{\delta,\io}_{\e,N}$. By the remark in the previous paragraph, $X^{\delta,\io}_{\e,n}$ is $\F_n$-measurable. In fact, it is a martingale with respect to $P^\io_\e$ since
\begin{multline*}
 E_{P^\io_\e}\big[\1_{G(t_n,\g_\e)}\big( \1_{\VG(t_n,\g_\e)}-P^\io_\e[\VG(t_n,\g_\e)\mid\F_{n-1}] \big)\mid\F_{n-1}\big] = \\
 = \1_{G(t_n,\g_\e)}\big( E_{P^\io_\e}[\1_{\VG(t_n,\g_\e)}\mid\F_{n-1}]-P^\io_\e[\VG(t_n,\g_\e)\mid\F_{n-1}] \big) = 0 \,.
\end{multline*}
But we will need a slightly different martingale. To this end, we define for $a\in\N_0$ 
$$ T_a := \inf\big\{n\in\N_0:\sum_{k=1}^n\1_{G(t_k,\g_\e)}\ge a\big\}\wedge (N+1) \,.$$
Then $\{T_a=n\}\in\F_{n-1}$ for all $n\in\{1,\ldots,N+1\}$ and $a\in\N_0$ (with $\F_{-1}:=\{\emptyset,\Om_\e\}$). Thus $T_a$ is a ``pre-visible stopping time", i.e.\ $T_a$ is $\F_{T_a-1}$-measurable. As $(T_a)_{a\in\N_0}$ is a non-decreasing sequence of bounded stopping times, the Optional Sampling Theorem implies that 
$$ \big(X^{\delta,\io}_{\e,T_a}\big)_{a\in\N_0} \text{ is an } (\F_{T_a})_{a\in\N_0}\text{-martingale with respect to }P^\io_\e \,.$$
It follows that
$$ E_{P^\io_\e}\big[X^{\delta,\io}_{\e,T_a}\big] \,=\,0 $$
and
$$ \Var_{P^\io_\e}\big[X^{\delta,\io}_{\e,T_a}\big] \,=\, \sum_{\tilde{a}=0}^{a-1} \Var_{P^\io_\e}\big[X^{\delta,\io}_{\e,T_{\tilde{a}+1}} - X^{\delta,\io}_{\e,T_{\tilde{a}}}\big] \,\le\, \sum_{\tilde{a}=0}^{a-1} 1 \,=\, a $$
since the absolute value of the increments is at most one. Indeed, as $T_a$ counts the number of good triangles, all addends between $T_{\ti{a}}$ and $T_{\ti{a}+1}$ are zero. 

Now we look at the processes stopped at time $T_M$. By Chebyshev's inequality it follows that
$$ P^\mu_\e\big[X^{\delta,\mu}_{\e,T_M} \ge \delta^{-1+\h\at}\big] \le \delta^{2-\at}\Var_{P^\mu_\e}[X^{\delta,\mu}_{\e,T_M}\big] \le \delta^{2-\at} \cdot M \le \delta^{2-\at}\cdot\delta^{-2+\at+\beta}=\delta^\beta .$$
Moreover, we have by Lemma~\ref{lem:evg-disc} on the event that there are at least $M$ good triangles, i.e.\ on $\{T_M\le N\}$
\begin{eqnarray*}
 X^{\delta,\mu}_{\e,T_M}
 &=&    X^{\delta,\la}_{\e,T_M} + \sum_{k=1}^{T_M} \1_{G(t_k,\g_\e)}\big(P^\la_\e[\VG(t_k,\g_\e)\mid\F_{k-1}]-P^\mu_\e[\VG(t_k,\g_\e)\mid\F_{k-1}]\big)\\
 &\ge&  X^{\delta,\la}_{\e,T_M} + \sum_{k=1}^{T_M} \1_{G(t_k,\g_\e)} \cdot \delta^{2-\af+\frac\beta2} \\
 &=&    X^{\delta,\la}_{\e,T_M} + M \cdot \delta^{2-\af+\frac\beta2} 
 \ge    X^{\delta,\la}_{\e,T_M} + \delta^{\at-\af+2\beta}
\end{eqnarray*}
for small enough $\delta$ and $\e\ll\delta$. Therefore
\begin{eqnarray*}
 P^\la_\e\big[X^{\delta,\mu}_{\e,T_M} \le \2\delta^{\at-\af+2\beta}, T_M\le N\big]
 &\le& P^\la_\e\big[X^{\delta,\la}_{\e,T_M} + \delta^{\at-\af+2\beta}\le \2\delta^{\at-\af+2\beta}\big] \\
 &=&   P^\la_\e\big[X^{\delta,\la}_{\e,T_M} \le -\2\delta^{\at-\af+2\beta}\big] \\
 &\le& 4 \,\delta^{2\af-2\at-4\beta}\,\Var_{P^\la_\e}[X^{\delta,\la}_{\e,T_M}]\\
 &\le& 4 \,\delta^{2\af-2\at-4\beta}\cdot M \,\le\, 4 \,\delta^{2\af-\at-2-3\beta}\,.
\end{eqnarray*}
Thus we arrived at
\begin{lem} \label{lem:est_disc}
 The following estimates hold:
 $$  P^\mu_\e\big[X^{\delta,\mu}_{\e,T_M} \ge \delta^{-1+\h\at}\big] \;\le\; \delta^\beta $$
 whereas
 $$ P^\la_\e\big[X^{\delta,\mu}_{\e,T_M} \le \2\delta^{\at-\af+2\beta}\big] \;\le\; 4\, \delta^{2\af-\at-2-3\beta} + P^\la_\e\big[T_M=N+1\big] $$
 for all small enough $\delta$ and $\e\ll\delta$. \qed
\end{lem}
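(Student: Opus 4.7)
The plan is to apply Chebyshev's inequality twice: once directly under $P^\mu_\e$ for the first bound, and once under $P^\la_\e$ after using Lemma~\ref{lem:evg-disc} to compare $X^{\delta,\mu}_{\e,T_M}$ with $X^{\delta,\la}_{\e,T_M}$ for the second. The single ingredient beyond Chebyshev and Lemma~\ref{lem:evg-disc} is the variance estimate $\Var_{P^\io_\e}[X^{\delta,\io}_{\e,T_M}]\le M$, which I would derive from the Optional Sampling Theorem (already established to make $(X^{\delta,\io}_{\e,T_a})_{a\in\N}$ an $(\F_{T_a})_{a\in\N}$-martingale under $P^\io_\e$) combined with the orthogonality of martingale increments and the observation that each increment $X^{\delta,\io}_{\e,T_{a+1}}-X^{\delta,\io}_{\e,T_a}$ has absolute value at most $1$: between $T_a$ and $T_{a+1}$ only one index corresponds to a good triangle, so only one summand is non-zero, and that summand is the difference between an indicator and its conditional expectation.

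For the first inequality I would apply Chebyshev at threshold $\delta^{-1+\at/2}$, obtaining $\delta^{2-\at}\cdot M\le\delta^{2-\at}\cdot\delta^{-2+\at+\beta}=\delta^\beta$. For the second, I would write
$$ X^{\delta,\mu}_{\e,T_M}-X^{\delta,\la}_{\e,T_M}=\sum_{k=1}^{T_M}\1_{G(t_k,\g_\e)}\big(P^\la_\e[\VG(t_k,\g_\e)\mid\F_{k-1}]-P^\mu_\e[\VG(t_k,\g_\e)\mid\F_{k-1}]\big) $$
and invoke Lemma~\ref{lem:evg-disc} (with $\beta/2$ in place of $\beta$) to bound each non-vanishing summand below by $\delta^{2-\af+\beta/2}$. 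On $\{T_M\le N\}$ exactly $M$ good triangles contribute, so the sum is at least $M\delta^{2-\af+\beta/2}\ge\delta^{\at-\af+2\beta}$ for $\delta$ small enough. Hence $\{X^{\delta,\mu}_{\e,T_M}\le\tfrac12\delta^{\at-\af+2\beta},\,T_M\le N\}\subseteq\{X^{\delta,\la}_{\e,T_M}\le-\tfrac12\delta^{\at-\af+2\beta}\}$, and a second Chebyshev estimate under $P^\la_\e$ at threshold $\tfrac12\delta^{\at-\af+2\beta}$ yields $4\delta^{2\af-2\at-4\beta}\cdot M\le 4\delta^{2\af-\at-2-3\beta}$. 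The remaining part of the event, $\{T_M=N+1\}$, is left as the additive term $P^\la_\e[T_M=N+1]$.

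The main obstacle is purely bookkeeping: choosing the threshold $M\asymp\delta^{-2+\at+\beta}$ large enough that the systematic drift $M\delta^{2-\af+\beta/2}$ from Lemma~\ref{lem:evg-disc} dominates the stochastic fluctuation of order $\sqrt M$, while still keeping the final error $\delta^{2\af-\at-2-3\beta}$ small, which requires precisely the inequality $2\af-\at>2$ from \eqref{eq:inexp}. Picking $\beta>0$ sufficiently small in relation to the gap $2\af-\at-2>0$ makes every exponent strictly positive and closes the argument; everything else is direct from the martingale structure and the lemmas already at hand.
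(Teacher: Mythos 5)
Your proposal is correct and follows essentially the same route as the paper: the martingale/optional-sampling variance bound $\Var_{P^\io_\e}[X^{\delta,\io}_{\e,T_M}]\le M$ with increments bounded by one, Chebyshev under $P^\mu_\e$ for the first estimate, and the pointwise comparison of $X^{\delta,\mu}_{\e,T_M}$ with $X^{\delta,\la}_{\e,T_M}$ via Lemma~\ref{lem:evg-disc} (applied with $\beta/2$) on $\{T_M\le N\}$ followed by Chebyshev under $P^\la_\e$, leaving $P^\la_\e[T_M=N+1]$ as the additive term. The only cosmetic difference is that inequality (\ref{eq:inexp}) is not actually needed for the lemma itself, only later to make the exponent $2\af-\at-2-3\beta$ positive.
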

Let us remark that $2\af-\at-2-3\beta>0$ for small $\beta$ by inequality (\ref{eq:inexp}). The main ingredient to Lemma~\ref{lem:est_disc} was the estimate of the variance. For $\io\in\{\mu,\la\}$, we estimated the variance of $X^{\delta,\io}_{\e,T_M}$ with respect to $P^\io_\e$ using a martingale structure. But this approach did not yield an estimate of the variance of $X^{\delta,\mu}_{\e,T_M}$ with respect to $P^\la_\e$ (mind the $\la$ and the $\mu$), which, together with the corresponding expectation, would have been nice for the second statement of Lemma~\ref{lem:est_disc}. Instead we used a point-wise estimate of $X^{\delta,\mu}_{\e,T_M}-X^{\delta,\la}_{\e,T_M}$.

One could be tempted to simply estimate the variance by independence since the considered triangles are disjoint. But this account is tricky since the exploration path obviously depends on its past, and therefore the events $G(t_k,\g_\e)$ respectively $\VG(t_k,\g_\e)$, $k\in\{1,\ldots,N\}$, are not independent. Moreover, the exploration path could enter, leave and re-enter the bottom half of the rectangle $r$ of some triangle while making a different triangle good and possibly very good in the meantime. Hence we chose the martingale approach described above which does not use any geometric information. Alternatively, it may be possible to estimate the variance with some ideas used in the proof of Lemma~\ref{lem:numgoodtrian} below.

We still have to look at the event $\{T_M=N+1\}$, i.e.\ at the event that there are less than $M=\lfloor \delta^{-2+\at+\beta} \rfloor$ good triangles to benefit from the second estimate of Lemma~\ref{lem:est_disc}.
\begin{lem} \label{lem:numgoodtrian}
 There are a function $J$ with $J(\delta)\to\infty$ as $\delta\to0$ and a numerical constant $C_0\in(0,1)$  such that 
 $$  P^\la_\e\big[T_M=N+1\big] \,=\, P^\la_\e\big[\sum_{k=1}^{N}\1_{G(t,\g_\e)} < M\big]\;\le\; (1-C_0)^{J(\delta)} $$
 for small enough $\delta$ and $\e\ll\delta$.
\end{lem}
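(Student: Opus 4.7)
The plan is a concentration argument: under $P^\la_\e$ the expected number of good triangles is much larger than $M$, and a martingale Bennett-type inequality then shows that fewer than $M$ good triangles occurs with stretched-exponential probability.

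Step~1 (expected count is large). Fix a triangle $t_k$ sitting in the bulk of $H_r$ (at distance bounded away from $\del H_r$). By the lower bound on the two-arm probability $\al^\e_2(\delta,1)\ge c\delta^{\at}$, which remains valid in the nearcritical regime because $p^\la\to\2$, the $P^\la_\e$-probability that $\g_\e$ visits $t_k$ (that is, $\s(t_k,\g_\e)<1$) is at least $c\delta^{\at}$. Conditioning on the path up to $\s(t_k,\g_\e)$, the hexagons inside $r(t_k)$ not yet revealed still carry an i.i.d.\ Bernoulli configuration with parameter $p^\la$, so that RSW together with the separation lemmas give
$$ P^\la_\e\big[G(t_k,\g_\e)\,\big|\,\F_{\s_k-1}\big]\,\ge\,C_0 $$
for some universal constant $C_0>0$, since $G(t_k,\g_\e)$ amounts to a local crossing inside $r(t_k)$. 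Summing the unconditional lower bound over the $N\asymp\delta^{-2}$ bulk triangles yields
$$ E^\la_\e\Big[\sum_{k=1}^N\1_{G(t_k,\g_\e)}\Big]\,\ge\,c'\,\delta^{-2+\at}, $$
which exceeds $M=\lfloor\delta^{-2+\at+\beta}\rfloor$ by the factor $\delta^{-\beta}\to\infty$.

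Step~2 (concentration). Introduce the martingale
$$ Y_n\,:=\,\sum_{k=1}^n\big(\1_{G(t_k,\g_\e)}-P^\la_\e[G(t_k,\g_\e)\mid\F_{k-1}]\big), $$
analogous to $X^{\delta,\io}_{\e,n}$ from Lemma~\ref{lem:evg-disc}, with increments in $[-1,1]$ and predictable quadratic variation bounded by $W_N:=\sum_{k=1}^N P^\la_\e[G(t_k,\g_\e)\mid\F_{k-1}]$. By Step~1, $E^\la_\e[W_N]\ge c'\delta^{-2+\at}$, and a second-moment / spatial-decorrelation argument applied to the visit indicators of well-separated triangles (whose visit events are governed by essentially independent RSW events, up to gluing corrections) shows that $W_N\ge\tfrac32 M$ with $P^\la_\e$-probability at least $1-\exp(-c\delta^{-2+\at})$. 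Freedman's inequality applied to $Y$, using this bound on the quadratic variation, then yields
$$ P^\la_\e\big[Y_N<-\tfrac12 M\big]\,\le\,\exp\big(-c''\delta^{-2+\at+2\beta}\big). $$
Combining $\sum_k\1_{G(t_k,\g_\e)}=Y_N+W_N\ge -\tfrac12 M+\tfrac32 M=M$ off an event of probability at most $\exp(-c'''\delta^{-2+\at+2\beta})$ yields the desired estimate.

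Step~3 (formatting). Setting $J(\delta):=c'''\delta^{-2+\at+2\beta}/|\log(1-C_0)|$, which tends to infinity as $\delta\to 0$ because $\at\le 1$ and $\beta$ is small, the stretched-exponential bound rewrites as $(1-C_0)^{J(\delta)}$.

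The main obstacle is the high-probability lower bound on $W_N$ in Step~2: one needs the sum of conditional probabilities to stay close to its expectation $\delta^{-2+\at}$, rather than merely having the expectation itself. The natural route is to partition the $N$ triangles into mesoscopic batches whose visit indicators become approximately independent after an RSW gluing argument, so that a Chernoff bound controls the Binomial-like sum; one must also ensure that the estimates in Step~1 hold uniformly as $\e\to 0$ along the chosen subsequence, which follows from the nearcritical RSW results of \cite{n7}.
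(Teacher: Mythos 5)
There is a genuine gap, and it sits exactly where you flag it yourself: the high-probability lower bound on $W_N$ in Step~2. First, a structural problem: with the filtration used in the paper (and which you invoke via ``analogous to $X^{\delta,\io}_{\e,n}$''), $\F_{k-1}=\s(\g_\e[0,\s_k])$, and the event $G(t_k,\g_\e)$ is decided by time $\s_k$, i.e.\ $G(t_k,\g_\e)\in\F_{k-1}$. Hence $P^\la_\e[G(t_k,\g_\e)\mid\F_{k-1}]=\1_{G(t_k,\g_\e)}$, your martingale $Y_n$ is identically zero, and $W_N=\sum_k\1_{G(t_k,\g_\e)}$ is precisely the quantity to be bounded; the decomposition $\sum_k\1_G=Y_N+W_N$ is circular, and Freedman's inequality gives nothing. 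Second, even if you recondition at a time just before the path approaches $t_k$, the claimed uniform bound $P^\la_\e[G(t_k,\g_\e)\mid\cdot]\ge C_0$ fails: goodness depends on \emph{how} the path first reaches the triangle (it must enter through the narrow slit $r(t)$ from below before touching $t\setminus r$), and the past path can approach from above or seal off the access to $l(t)$, so no constant conditional lower bound holds; this is exactly why the paper replaces $G(t,\g_\e)$ by the configuration events $G_j'(t)$ (a blue and a yellow arm from $b(t)$ through $r(t)$ to the two boundary rays), which imply goodness and whose probabilities are controlled by arm estimates.

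Third, and most fundamentally, the concentration claim ``$W_N\ge\tfrac32 M$ with probability $1-\exp(-c\delta^{-2+\at})$ by a second-moment / spatial-decorrelation argument'' cannot be correct in the form stated. The visit/goodness indicators of well-separated triangles are not close to independent: they are strongly positively correlated through the macroscopic geometry of the single exploration path (for two triangles at distance $d$ one has $P[\text{both visited}]\asymp\al_2^\e(\delta,d)^2\al_2^\e(d,1)$ versus $P[\text{visited}]^2\asymp\al_2^\e(\delta,d)^2\al_2^\e(d,1)^2$, a ratio $1/\al_2^\e(d,1)\gg1$), and summing these covariances gives $\Var\big[\sum_k\1_{G}\big]\asymp E\big[\sum_k\1_{G}\big]^2$. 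This is visible in the paper's own computation, which only obtains $E_{P^\la_\e}[G_j^2]\le c_7E_{P^\la_\e}[G_j]^2$ with a constant $c_7$ bounded away from $1$, and hence only the Paley--Zygmund bound $P^\la_\e[G_j\ge\2E[G_j]]\ge C_0$ for a constant $C_0\in(0,1)$ --- a constant-probability statement, not a high-probability one; no Chernoff-type bound applies because there is no independence or negative association to exploit. The paper's route around this obstacle is multi-scale: it works in the $\asymp\log(1/\delta)$ disjoint half-annuli $B_j=H_{r2^{-j}}\setminus H_{r2^{-j-1}}$, chooses $J$ so that $\2E_{P^\la_\e}[G_j]\ge M$ for every $j\le J$, gets the constant chance $C_0$ of ``at least $M$ good triangles in $B_j$'' at each scale from the second-moment method applied to $G_j'(t)$, and then uses that these events live in disjoint regions, hence are independent, to multiply the failure probabilities and obtain $(1-C_0)^{J+1}$ with $J(\delta)\asymp\log(1/\delta)$. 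Your Steps~1 and the expectation estimate are in the right spirit, but without the annulus decomposition (or some other genuinely new argument for the lower tail) the proof does not go through.
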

\begin{proof}
 We follow the rough outline in \cite[p.\ 816f]{nw9} using ideas of the proof of \cite[Proposition 2]{nw9}. We choose $J=J(\delta)$ such that $\delta^{\beta} = c_5 (r2^{-J})^{2-\at}$ with some constant $c_5>0$ specified below. The reason for that choice will become clear later on. Since $\at<2$, $J(\delta)$ tends to infinity as $\delta\to0$.
 
 We use the notation $f(\e,\delta,j)\asymp g(\e,\delta,j)$ to indicate that there are numerical constants $c,c'>0$ such that $\exists\,\delta_0>0$ $\forall\,\delta<\delta_0$ $\exists\,\e_0>0$ $\forall\,\e<\e_0$ $\forall\,j\in\{0,\ldots,\lfloor J\rfloor\}$:
 $$ c\,f(\e,\delta,j)\le g(\e,\delta,j) \le c'\,f(\e,\delta,j)\,.$$
 
 Below we will need a statement similar to item (3) in \cite[p.\ 803]{nw9}, namely
 \begin{equation} \label{eq:sum}
  \sum_{k=1}^n k\al^\e_2(\delta,4\delta k) \asymp n^2 \al^\e_2(\delta,4\delta n) \,. 
 \end{equation}
 Since it is harder to cross a larger annulus, we have in one direction
 $$ \sum_{k=1}^n k\al^\e_2(\delta,4\delta k) \ge \sum_{k=1}^n k\al^\e_2(\delta,4\delta n) \ge \2 n^2 \al^\e_2(\delta,4\delta n) \,.$$
 The other direction follows using quasi-multiplicativity:
 $$
  \sum_{k=1}^n k\frac{\al^\e_2(\delta,4\delta k)}{\al^\e_2(\delta,4\delta n)} 
  \le C  \sum_{k=1}^n k\frac{1}{\al^\e_2(4\delta k,4\delta n)}
  \le C \sum_{k=1}^n k \Big(\frac{4\delta n}{4\delta k}\Big)^\at
  \le C \sum_{k=1}^n k \frac{n}{k} = C n^2
 $$
 since $\at\le1$.
 
 Now we begin with the actual proof. We choose $0<\delta<\delta_0$ and $0<\e<\e_0$ for some appropriate $\delta_0>\e_0>0$. Let us recall that our domain is the half-circle $H_r$ with radius $r>0$. We consider the following half-annuli:
 $$ B_j \,:=\, H_{r2^{-j}} \setminus H_{r2^{-j-1}}\qquad j\in\{0,\ldots,\lfloor J\rfloor\}\,, $$
 If $\delta$ and $\beta$ are small enough, then $\delta<2^{-10}c_5^{-1}\delta^\beta=2^{-10}(r2^{-J})^{2-\at}\le r2^{-J-10}$ by the choice of $J$ and $\at\le1$. Thus there are some triangles in the half-annuli. Let $\T_j$ be the set of all triangles which are contained in $B_j$ and whose distance from the boundary of $B_j$ is at least $r2^{-j-3}$. $\T_j$ consists of $\asymp r^22^{-2j}\delta^{-2}$ triangles. For a triangle $t\in\T_j$, let $G_j'(t)$ be the event that there are a blue and a yellow arm originating at $b(t)$, crossing $r(t)$, staying inside $B_j$ and finally ending at the negative respectively positive real axis. If $G_j'(t)$ is fulfilled, then $t$ is good for $\g_\e$, i.e.\ $G_j'(t)\subset G(t,\g_\e)$.
 
 Now we want to estimate the probability of $G_j'(t)$. Note that $G_j'(t)$ implies $A_2(t,\delta,r2^{-j-3})$,  the event that there exist two arms of different colour inside the annulus with radii $\delta$ and $r2^{-j-3}$ centred at the centre of $t$. Conversely if $A_2(t,\delta,r2^{-j-3})$ with some specified separated landing sequences is fulfilled and if some deterministic rectangles of fixed aspect ratio are crossed, then $G_j'(t)$ occurs. By the arm separation lemmas and RSW it follows that
 $$ P^\la_\e\big[G_j'(t)\big] \,\asymp\, P^\la_\e\big[A_2(t,\delta,r2^{-j-3})] \,\asymp\, \al_2^\e(\delta,r2^{-j})\,.$$
  
 Let the random variable $G_j$ be the number of triangles $t\in\T_j$ that fulfil $G_j'(t)$. We want to estimate the probability that $G_j$ is quite small. Thereto we apply the second moment method. While the first moment is immediately estimated:
 $$ E_{P^\la_\e}[G_j] = \sum_{t\in\T_j} P^\la_\e[G_j'(t)] \asymp \sum_{t\in\T_j}\al_2^\e(\delta,r2^{-j}) \asymp r^22^{-2j}\delta^{-2} \al_2^\e(\delta,r2^{-j})\,,$$
 the second moment is more involved. Let $t,\ti{t}\in\T_j$ be two different triangles. Let $\|t;\ti{t}\|$ denote the distance of their centres. If both events $G_j'(t)$ and $G_j'(\ti{t})$ occur, then there are two crossings of different colour in each of the the following three annuli: the annulus around $t$ with radii $\delta$ and $\2\|t;\ti{t}\|$, the annulus around $\ti{t}$ with radii $\delta$ and $\2\|t;\ti{t}\|$, and finally the annulus around the centre between the two triangles with radii $2\|t;\ti{t}\|$ and $r2^{-j-3}$. Since these annuli are disjoint, it follows that
 $$ P^\la_\e[G_j'(t)\cap G_j'(\ti{t})] \,\le\, c_1 \cdot \al^\e_2(\delta,\2\|t;\ti{t}\|)\cdot \al^\e_2(\delta,\2\|t;\ti{t}\|) \cdot \al^\e_2(2\|t;\ti{t}\|,r2^{-j-3})\,.$$ 
 Here and in the following, $c_1,c_2,\ldots,c_7>0$ are numerical constants. Using quasi-multi\-plicativity, we conclude
 \begin{eqnarray*}
  E_{P^\la_\e}\big[G_j^2\big]
  &=&	\sum_{t\in\T_j} P^\la_\e[G_j'(t)] + \sum_{t\ne\ti{t}\in\T_j} P^\la_\e[G_j'(t)\cap G_j'(\ti{t})] \\
  &\le& E_{P^\la_\e}\big[G_j\big] + c_2 \sum_{t\ne\ti{t}\in\T_j} \al^\e_2(\delta,4\delta\big\lfloor\tfrac{1}{8\delta}\|t;\ti{t}\|\big\rfloor)\cdot \al^\e_2(\delta,r2^{-j})\,.
 \end{eqnarray*}
 Since the triangles were placed using a triangular grid of mesh size $4\delta$, there are at most $c_3\cdot k$ triangles in $T_j$ at distance $4\delta k$ from some fixed triangle for $k\in\{1,\ldots,\lfloor r2^{-j}/\delta\rfloor\}$ and no triangles further away. This, equation (\ref{eq:sum}) and the estimate of the first moment imply
 \begin{eqnarray*}
  E_{P^\la_\e}\big[G_j^2\big] -E_{P^\la_\e}\big[G_j\big]
  &\le&  c_2\sum_{t\in\T_j} \sum_{k=1}^{\lfloor r2^{-j}/\delta\rfloor} c_3 \,k\,\al^\e_2(\delta,4\delta\,k)\cdot \al^\e_2(\delta,r2^{-j}) \\
  &\asymp& E_{P^\la_\e}[G_j] \cdot \lfloor r2^{-j}/\delta\rfloor^2 \al^\e_2(\delta,4\delta\lfloor r2^{-j}/\delta\rfloor) \;\asymp\; E_{P^\la_\e}[G_j]^2\,.
 \end{eqnarray*}
 As (note that $\h$ and $M$ will become relevant later on)
 \begin{eqnarray*}
  \2 E_{P^\la_\e}[G_j]
  &\ge& c_4 r^22^{-2j}\delta^{-2} \al_2^\e(\delta,r2^{-j}) 
  \,\ge\, c_5 r^22^{-2j}\delta^{-2} \big(\delta/(r2^{-j})\big)^\at \\
  &=& c_5\big(r2^{-j}\delta^{-1}\big)^{2-\at} 
  \,\ge\,  c_5 \big(r2^{-J}\delta^{-1}\big)^{2-\at} 
  \,=\, \delta^\beta \delta^{-2+\at} \,\ge\, M \,\ge\, 1
 \end{eqnarray*}
 by our choice of $J$ and $M=\lfloor \delta^{-2+\at+\beta} \rfloor$, we conclude
 $$ E_{P^\la_\e}\big[G_j^2\big] \,\le\, c_6\, E_{P^\la_\e}\big[G_j\big]^2+E_{P^\la_\e}\big[G_j\big]
 \,\le\, c_7\,E_{P^\la_\e}\big[G_j\big]^2 \,.$$ 
 Since
 $$ E[X] = E\big[X\1_{X<\h E[X]} + X\1_{X\ge\h E[X]}\big] \le \2 E[X] + E\big[X\1_{X\ge\h E[X]}\big]$$
 and therefore
 $$ \big(\2E[X]\big)^2 \le E\big[X\1_{X\ge\h E[X]}\big]^2 \le E\big[X^2\big] \cdot P\big[X\ge\2E[X]\big] $$
 holds for any non-negative random variable $X$, we conclude
 $$ P^\la_\e\big[G_j\ge \2E_{P^\la_\e}[G_j] \big] \ge \frac{E_{P^\la_\e}[G_j]^2}{4\,E_{P^\la_\e}[G_j^2]} \ge C_0 $$
 for the numerical constant $C_0:=(4c_7)^{-1}\in(0,1)$. 
 
 As $G_j$ depends only on the hexagons inside $B_j$ and as these sets are pairwise disjoint, it follows that
 $$ P^\la_\e\big[G_j < \2E_{P^\la_\e}[G_j] \text{ for all }j\in\{0,\ldots,\lfloor J\rfloor\}\big] \le (1-C_0)^{J+1} \,.$$  
 Now we link the former event to the event of interest to conclude the proof. On the one hand, we have 
 $$ G_j \;\le\; \sum_{k=1}^N\1_{G(t_k,\g_\e)} $$
 for all $j\le J$ since every triangle $t$ with $G_j'(t)$ is good for $\g_\e$. On the other hand, we already estimated for all $j\le J$: 
 $$ \2E_{P^\la_\e}[G_j] \,\ge\, M \,.$$
 Therefore we conclude
 $$ P^\la_\e\big[\sum_{k=1}^N\1_{G(t,\g_\e)} < M\big]\;\le\; P^\la_\e\big[G_j < \2E_{P^\la_\e}[G_j] \text{ for all }j\le J\big] \;\le\; (1-C_0)^{J+1} \,,$$
 which completes the proof.
\end{proof}
In fact, this lemma is the only place where we used the fact that we have a straight boundary near the starting point of the exploration path. Therefore it was possible to define the sets $B_j$ such that the estimates above hold uniformly for all $j$. A smooth boundary would also have been sufficient, but for a fractal boundary additional ideas are necessary. 

\subsection{Continuum Limit} \label{ssec:limit}

Now we want to pass to the limit. Thereto we will need the following convergence lemma. Let us remark, that Nolin and Werner could just rely on Cardy's formula for their convergence results whereas we will have to use Lemma~\ref{lem:kern}.
\begin{lem} \label{lem:convergence}
 Let $\mathcal{T}$ be a finite set of triangles in $H_r$. Then there exists a set $\mathcal{N}\subset\mathcal{S}_r$ with
 $$ \G^\io[\mathcal{N}] = 0\,,\quad \io\in\{\mu,\la\}\,,$$
 and such that for all $\g\in\mathcal{N}^c$ the following holds: If $\g^n$, $n\in\N$, is a sequence in $\mathcal{S}_r$ with $\dist(\g^n,\g)\to0$ as $n\to\infty$, then for all triangles $t\in\mathcal{T}$
 $$ \1_{G(t,\g^n)}\,\to\,\1_{G(t,\g)} \,,\qquad \1_{\VG(t,\g^n)}\,\to\,\1_{\VG(t,\g)} $$
 as $n\to\infty$ and for all $\rho>0$ there exist $n_0\in\N$ and $\e_0>0$ such that for all $n\ge n_0$ and $\e\le\e_0$
 $$ \big| \1_{G(t,\g^n)} P^\io_\e\big[\hc{d(t,\g^n)}\big] - \1_{G(t,\g)} P^\io_\e\big[\hc{d(t,\g)}\big] \big| \le \rho$$
 for $\iota\in\{\mu,\la\}$.
\end{lem}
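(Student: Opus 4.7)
The plan is to take $\mathcal{N}$ to be the set of curves that behave non-generically with respect to at least one triangle $t \in \T$, so that off $\mathcal{N}$ the three functionals $\g \mapsto \1_{G(t,\g)}$, $\g \mapsto \1_{\VG(t,\g)}$ and $\g \mapsto d(t,\g)$ (regarded as a quad) are continuous in the metric $\dist$. Concretely, for each $t$ I would put into $\mathcal{N}(t)$ the curves satisfying at least one of the following: $\g(\s(t,\g))$ coincides with an endpoint of $b(t)$; $\g$ touches $m(t)$ or $l(t)$ tangentially (grazing without crossing); $\g[0,\s]$ meets $\del r(t)$ in more than the intended prime-end intersections that define $\del^2,\del^3$; or the distinguished points $a^1,a^2,\g(\s)$ coincide or land on non-generic positions on $m(t)$. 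Set $\mathcal{N} := \bigcup_{t \in \T} \mathcal{N}(t)$.

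Off $\mathcal{N}$ the pointwise convergence of the two indicators is a direct topological argument: if $\dist(\g^n, \g) \to 0$ and $\g \notin \mathcal{N}(t)$ then $\s(t,\g^n) \to \s(t,\g)$, the hitting points $\g^n(\s(t,\g^n))$ converge to $\g(\s(t,\g))$, and after time $\s(t,\g^n)$ the curve $\g^n$ reaches $\del^0(t,\g^n)$ before $\del^1(t,\g^n)$ iff $\g$ does. For the quad, the pieces $\del^0(t,\g^n),\del^1(t,\g^n)$ lie on the deterministic segments of $r(t)$ and converge trivially, while $\del^2(t,\g^n),\del^3(t,\g^n)$ are traced out by $\g^n[0,\s(t,\g^n)]$ and converge uniformly to the corresponding sides for $\g$. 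Off $\mathcal{N}(t)$ these four sides remain quantitatively separated, which gives both kernel convergence $d(t,\g^n) \to d(t,\g)$ with respect to any interior point, and the separating-path requirement~(\ref{eq:regin}).

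The main obstacle is the measure estimate $\G^\io[\mathcal{N}] = 0$. I would realise each $\mathcal{N}(t)$ as a countable decreasing intersection of open ``$\zeta$-fattened'' events $\mathcal{N}_\zeta(t)$ describing near-non-generic behaviour, so that by the Portmanteau theorem applied to the weak limit $\G^\io_{\e_k} \to \G^\io$ it suffices to bound $\G^\io_{\e_k}[\mathcal{N}_\zeta(t)] = P^\io_{\e_k}[\{\g_{\e_k} \in \mathcal{N}_\zeta(t)\}]$ uniformly in $\e_k$ by a quantity $\to 0$ as $\zeta \to 0$. These discrete bounds are standard RSW/arm-event consequences, which stay valid in the near-critical regime via the uniform estimates of \cite{n7}: a near-coincidence of $\g_{\e}(\s)$ with a deterministic endpoint of $b(t)$ forces a polychromatic three-arm half-plane event (exponent $>1$) near that point, and a tangential approach to $m(t)$ or $l(t)$ forces a two-arm half-plane event (exponent $1$), both giving power decay in $\zeta$.

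Given $\G^\io[\mathcal{N}] = 0$, the second assertion follows by combining pointwise continuity with Lemma~\ref{lem:kern}: on $\{G(t,\g) = 1\}$ the kernel convergence $d(t,\g^n) \to d(t,\g)$ together with (\ref{eq:regin}) yields $P^\io_\e\bigl[\hc{d(t,\g^n)} \triangle \hc{d(t,\g)}\bigr] \le \rho$ for all $n \ge n_0$ and $\e \le \e_0$, uniformly in $\io \in \{\mu,\la\}$; on $\{G(t,\g) = 0\}$ the indicator $\1_{G(t,\g^n)}$ vanishes for all $n$ large enough, so the difference under consideration is identically zero. Finally, since $\T$ is finite, taking the maximum of the $n_0$'s and the minimum of the $\e_0$'s over $t \in \T$ gives a single pair that works for all triangles simultaneously, completing the proof.
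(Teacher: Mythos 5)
Your overall architecture (an exceptional set $\mathcal{N}$ of non-generic curves, continuity of $G$, $\VG$ and of the quad $d(t,\cdot)$ off $\mathcal{N}$, then Lemma~\ref{lem:kern} via kernel convergence plus condition~(\ref{eq:regin}) for the uniform crossing-probability estimate) matches the paper, and your Portmanteau/fattened-event scheme for transferring discrete arm estimates to the limit laws is a legitimate way to make the measure-zero claim precise. However, there is a genuine gap in the key estimate for exactly those fattened events. For the ``touching a deterministic segment without crossing it'' events you invoke a \emph{two}-arm half-plane event with exponent $1$. That is not enough: the touch point is not fixed, so you must union over the $\asymp \zeta^{-1}$ balls of radius $\zeta$ covering the segment, and with a per-ball probability of order $\zeta^{1}$ the union bound gives a quantity of order one, not something tending to $0$ as $\zeta\to0$. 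Moreover, a two-arm half-plane event is the wrong event: a single strand of the exploration path passing near the line already produces two arms (its adjacent blue and yellow hexagons), and such passages occur with probability bounded away from zero. What a graze-without-crossing actually forces is a \emph{three}-arm half-plane event of alternating colours at the touch point (the two strands of the interface together with the opposite-coloured region between them), whose exponent is $2>1$; then the union over locations gives a bound of order $\zeta^{2-1}=\zeta\to0$. This is precisely the point the paper uses, and without it your proof of $\G^\io[\mathcal{N}]=0$ does not close.

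Two smaller points. First, your list of non-generic behaviours should explicitly exclude hitting the corner $a^0(t)$ (needed so that the $\VG$-status is stable under perturbation of the curve) and grazing of $b(t)$ and of $\del t'$ without crossing (these, rather than $m(t)$ and $l(t)$ alone, are what control the stability of the event $G$ and the kernel convergence of $d(t,\g^n)$ together with condition~(\ref{eq:regin})); for isolated deterministic points such as the endpoints of $b(t)$ or $a^0(t)$ no union over locations is needed, so any positive whole-plane arm exponent (concentric annuli and RSW, as in the paper) suffices there. Second, these per-event bounds must be uniform in $\e$ in the nearcritical regime, which you correctly note follows from \cite{n7}; with the three-arm replacement above and the corrected exceptional set, the rest of your argument, including the final finite maximisation over $t\in\mathcal{T}$, goes through.
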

\begin{proof}
 Let $\mathcal{N}$ be the set of all curves $\g$ which -- for some triangle $t\in\mathcal{T}$ -- hit an end point of $b(t)$ or $a^0(t)$ or only touch $b(t)$ or the boundary of $t'$  without crossing them. Then we claim that RSW implies that $\G^\io[\mathcal{N}]=0$, $\io\in\{\mu,\la\}$. Indeed, considering concentric annuli around any deterministic point yields that $\g$ hits that point with $\G^\io$-probability zero. And if $\g$ touches any deterministic straight line (without crossing it), then there are three macroscopic (i.e.\ of size $r$) arms of alternating colours originating at some point on the line going to one of its sides. Since the $3$-arm half-plane exponent is larger than $1$ (in fact, it is $2$ by RSW considerations, see \cite[Theorem 23]{n7}, for instance), this event has $\G^\io$-probability zero. As $\mathcal{N}$ consists of finitely many such events, the claim follows.
 
 For the remainder of the proof let $\g\in\mathcal{N}^c$, let $\g^n$ converge to $\g$ in the $\dist$-metric and let $t\in\mathcal{T}$.

 Suppose that $t$ is good for $\g$. Since $\dist(\g^n,\g)\to0$, i.e.\ $\g^n[0,1]\to\g[0,1]$ in the Hausdorff sense, and since $\g$ crosses $b$ at $\s$ and does not hit an end point of $b$ (because of $\g\in\mathcal{N}^c$), $t$ is also good for $\g^n$ for all large enough $n$. Conversely, if $t$ is good for $\g^n$ for all large $n$, it is also good for $\g$. Now let $t$ be good for $\g$ and for $\g^n$ for all large $n$. Since $\g$ crosses $\del^0\cup\del^1$ at the first hitting and since $\g$ does not hit $a^0$, the status of being very good is identical for $\g$ and for $\g^n$ for all large enough $n$. Thus we have shown that $\1_{G(t,\g^n)}\to\1_{G(t,\g)}$ and $\1_{\VG(t,\g^n)}\to\1_{\VG(t,\g)}$
 as $n\to\infty$.
 
 For the last assertion let $\rho>0$. We can assume that $t$ is good for $\g$ and for $\g_n$ for all large $n$. Since $d(t,\g)$ is defined as the connected component of $t'\setminus\g[0,\s]$ which contains a point near the tip of $t$ together with some components also defined by $\g[0,\s]$ and as $\dist(\g^n,\g)\to0$, we conclude that $d(t,\g^n)$ converge  in the kernel sense to $d(t,\g)$. Furthermore, $\dist(\g^n,\g)\to0$ implies condition (\ref{eq:regin}). Thus Lemma~\ref{lem:kern} yields that there are $n_0\in\N$ and $\e_0>0$ such that for all $n\ge n_0$ and $\e\le\e_0$
 $$ \Big| P^\io_\e\big[\hc{d(t,\g^n)}\big] - P^\io_\e\big[\hc{d(t,\g)}\big] \Big|
   \:\le\: P^\io_\e\big[\hc{d(t,\g^n)}\,\triangle\,\hc{d(t,\g)}\big] \:\le\: \rho $$
 which implies the last assertion since $G(t,\g^n)$ and $G(t,\g^n)$ for all large $n$ simultaneously hold.
\end{proof}

Inspired by the random variables $T_a$ and $X$ defined on $\Om_\e$, we define the following random variables, but on $\mathcal{S}_r$ this time. We still have $\e\ll\delta$ fixed and we use the triangles defined above. Given a curve $\g\in\mathcal{S}_r$ we arrange them in the order $t_1,\ldots,t_N$ according to their hitting time as above. Recall that $M=\lfloor \delta^{-2+\at+\beta} \rfloor$. We define 
$$ T \::=\: \inf\{n\in\N: \sum_{k=1}^n \1_{G(t_k,\cdot)}\ge M\}\wedge (N+1) $$
and
$$ Z^{\delta,\mu}_\e \::=\: \sum_{k=1}^T \1_{G(t_k,\cdot)}\big(\1_{\VG(t_k,\cdot)}-P^\mu_\e\big[\hc{d(t_k,\cdot)}\big]\big) $$
on $\mathcal{S}_r$. Finally we define, letting $\e\to0$ now,
$$ Z^{\delta,\mu} \::=\: \lim_{\e\to0} Z^{\delta,\mu}_\e \,=\, \sum_{k=1}^T \1_{G(t_k,\cdot)}\big(\1_{\VG(t_k,\cdot)}-\lim_{\e\to0} P^\mu_\e\big[\hc{d(t_k,\cdot)}\big]\big)\,,$$
which resembles the quantity $Z$ in \cite[p.\ 817]{nw9}. The limit exists for all curves $\g\in\mathcal{S}_r$ since we have chosen the subsequence $(\e_k)_{k\in\N}$ with the property that the limit of the crossing probabilities of any quad exists. Note that we defined these random variables only for the parameter $\mu$ and not for $\la$, since we will only need the versions with $\mu$.
\begin{lem} \label{lem:weakconv}
 The laws $Z^{\delta,\mu}_\e (\G^\io_\e)$ converge weakly to $Z^{\delta,\mu} (\G^\io)$ as $\e\to0$, for $\io\in\{\mu,\la\}$.
\end{lem}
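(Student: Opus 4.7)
The plan is to combine Skorokhod's representation theorem with bounded convergence. Since $(\mathcal{S}_r,\dist)$ is Polish and $\G^\io_\e\to\G^\io$ weakly, one may realise on a common probability space random curves $\g^\e$ with law $\G^\io_\e$ and $\g$ with law $\G^\io$ such that $\g^\e\to\g$ almost surely in the $\dist$-metric. Both $Z^{\delta,\mu}_\e$ and $Z^{\delta,\mu}$ are bounded in absolute value by the deterministic constant $M$, so once I show that $Z^{\delta,\mu}_\e(\g^\e)\to Z^{\delta,\mu}(\g)$ almost surely, bounded convergence will yield $E[f(Z^{\delta,\mu}_\e(\g^\e))]\to E[f(Z^{\delta,\mu}(\g))]$ for every bounded continuous $f$, which is precisely the desired weak convergence.

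For the almost-sure convergence I would restrict to the full-measure event $\{\g\in\mathcal{N}^c\}$, where $\mathcal{N}$ is the exceptional set produced by Lemma~\ref{lem:convergence} applied to the finite collection of $N$ triangles of our construction. On this event, Lemma~\ref{lem:convergence} gives $\1_{G(t_k,\g^\e)}\to\1_{G(t_k,\g)}$ and $\1_{\VG(t_k,\g^\e)}\to\1_{\VG(t_k,\g)}$ for every $k$; since these indicators are $\{0,1\}$-valued and there are only finitely many triangles, all of them coincide with those of $\g$ below a common threshold $\e_1>0$. Moreover, the $N$ triangles are pairwise disjoint, so the finite hitting times $\s(t_k,\g)$ are pairwise distinct; because $\g\in\mathcal{N}^c$ rules out tangential contacts with the relevant boundaries, uniform convergence $\g^\e\to\g$ then forces the ordering of hit triangles for $\g^\e$ to match that of $\g$ for small enough $\e$. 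Since good triangles are in particular hit, these two facts together imply $T(\g^\e)=T(\g)$ for all sufficiently small $\e$.

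With $T$ and the indicator pattern stabilised, the difference $Z^{\delta,\mu}_\e(\g^\e)-Z^{\delta,\mu}(\g)$ reduces to a sum of at most $M$ terms of the form
\begin{equation*}
\1_{G(t_k,\g^\e)}\,P^\mu_\e\big[\hc{d(t_k,\g^\e)}\big]\;-\;\1_{G(t_k,\g)}\lim_{\e'\to 0}P^\mu_{\e'}\big[\hc{d(t_k,\g)}\big].
\end{equation*}
I would bound each term by a triangle-inequality split. The third assertion of Lemma~\ref{lem:convergence}, applied to $\g^\e\to\g$, replaces the quad $d(t_k,\g^\e)$ by $d(t_k,\g)$ inside $P^\mu_\e[\,\cdot\,]$ at the cost of at most $\rho$, uniformly in all sufficiently small $\e$. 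The remaining discrepancy $P^\mu_\e[\hc{d(t_k,\g)}]-\lim_{\e'\to 0}P^\mu_{\e'}[\hc{d(t_k,\g)}]$ tends to zero by the standing hypothesis of Theorem~\ref{mainthm}, since $d(t_k,\g)\in\mathcal{Q}_{H_r}$ and the subsequence $(\e_k)$ was chosen precisely so that $P^\mu_\e[\hc{q}]$ converges for every such quad.

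The main obstacle is the genuinely joint nature of the limit: as $\e\to 0$, both the discrete exploration curve (which determines the random quad $d(t_k,\g^\e)$) and the percolation measure $P^\mu_\e$ vary simultaneously, so standard continuous-mapping arguments do not apply directly. This is precisely the difficulty for which Lemma~\ref{lem:kern} -- and through it, the third clause of Lemma~\ref{lem:convergence} -- was designed: by comparing crossing probabilities of different but close quads under the \emph{same} percolation measure, it decouples the two $\e$-dependencies.
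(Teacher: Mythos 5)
Your proposal is correct and follows essentially the same route as the paper: Skorokhod coupling, almost-sure convergence of $Z^{\delta,\mu}_\e$ along the coupled curves on the full-measure set $\mathcal{N}^c$ furnished by Lemma~\ref{lem:convergence}, and then bounded (dominated) convergence to get weak convergence. Your explicit triangle-inequality split of the crossing-probability term --- same measure with nearby quads via the uniform third clause of Lemma~\ref{lem:convergence}, plus convergence along the chosen subsequence for the fixed quad $d(t_k,\g)$ --- is exactly the paper's remark that the double limit is uniform, so nothing further is needed.
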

\begin{proof}
 Let $\io\in\{\mu,\la\}$. We use Skorokhod's representation theorem to construct the following coupling.  Let $(\bar{\Om},\bar{\A},\bar{P})$ be a suitable probability space and let $\bar{\g}, \bar{\g}_\e: \bar{\Om}\to\mathcal{S}_r$, $\e>0$, random variables such that $ \bar{\g}_\e \to \bar{\g}$ $\bar{P}$-a.s. (in the $\dist$-metric) as $\e\to0$ and such that $\G^\io = \bar{\g}(\bar{P})$ and $\G^\io_\e = \bar{\g}_\e(\bar{P})$, $\e>0$. From Lemma~\ref{lem:convergence} it follows that
 $$ Z^{\delta,\mu}_\e\circ\bar{\g}_\e \;\to\; Z^{\delta,\mu}\circ\bar{\g} \quad \bar{P}\text{-a.s.} $$
 since $\bar{P}\big[\bar{\g}^{-1}[\mathcal{N}^c]\big]=\G^\io[\mathcal{N}^c]=1$ and since every ingredient converges on $\bar{\g}^{-1}[\mathcal{N}^c]$. In particular note that Lemma~\ref{lem:convergence} implies that if we choose any sequence $n(\e)$ such that $n(\e)\to\infty$ as $\e\to0$, then
 $$  \lim_{\e\to0} \1_{G(t,\g^{n(\e)})} P^\mu_\e\big[\hc{d(t,\g^{n(\e)})}\big] \;=\; \lim_{n\to\infty} \lim_{\e\to0} \1_{G(t,\g^n)} P^\mu_\e\big[\hc{d(t,\g^n)}\big] \,,$$
 since the double limit is uniform in $n$ and $\e$. Therefore $\1_{G(t,\bar{\g}_\e)} P^\io_\e[\hc{d(t,\bar{\g}_\e)}]$ converges on $\bar{\g}^{-1}[\mathcal{N}^c]$.
 
 Let $f:\R\to\R$ be a continuous and bounded function. By the Dominated Convergence Theorem, we conclude
 $$ \int f \,d\big(Z^{\delta,\mu}_\e(\G^\io_\e)\big) = \int f(Z^{\delta,\mu}_\e\circ\bar{\g}_\e) \,d\bar{P} \to
    \int f(Z^{\delta,\mu}\circ\bar{\g}) \,d\bar{P} = \int f \,d\big(Z^{\delta,\mu}(\G^\io)\big) $$
 as $\e\to0$. Thus the Portmanteau Theorem yields the desired weak convergence.
\end{proof}
For  that Lemma it is crucial that the limit of $Z^{\delta,\mu}_\e$ does exist, which is ensured by the choice of the sequence $\e_k$ in the very beginning. For the definition of $Z^{\delta,\mu}$, in principle, it is possible to use the limes superior. But then there are problems showing the weak convergence since the sequence used to determine the limes superior may depend on $\g$. The results in \cite{ss11} allowed us to choose the same sequence for all curves.

Now we give the link between the results on the discrete paths and the convergence lemmas to conclude the proof of Theorem~\ref{mainthm}. The key is the following connection between the random variables $X^{\delta,\mu}_{\e,T_M}$, $Z^{\delta,\mu}_\e$ and $\g_\e$. On the event that a triangle $t$ is good for the discrete exploration path $\g_\e$, it is very good if and only if the quad $d(t,\g_\e)$ is crossed. Therefore
$$ X^{\delta,\mu}_{\e,T_M} \:=\: Z^{\delta,\mu}_\e \circ \g_\e $$
by their definitions. We conclude $Z^{\delta,\mu}_\e(\G^\io_\e) = Z^{\delta,\mu}_\e(\g_\e(P^\io_\e)) = (Z^{\delta,\mu}_\e\circ\g_\e)(P^\io_\e) = X^{\delta,\mu}_{\e,T_M}(P^\io_\e)$. Now Lemma~\ref{lem:est_disc} yields
$$ \G^\mu_\e\big[Z^{\delta,\mu}_{\e} \ge \delta^{-1+\h\at}\big] \,=\, P^\mu_\e\big[X^{\delta,\mu}_{\e,T_M} \ge \delta^{-1+\h\at}\big] \;\le\; \delta^\beta $$
and
$$ \G^\la_\e\big[Z^{\delta,\mu}_\e \le \2\delta^{\at-\af+2\beta}\big] \;\le\; 4 \delta^{2\af-\at-2-3\beta} + P^\la_\e\big[T_M=N+1\big]\,. $$
With Lemma~\ref{lem:weakconv} and the Portmanteau Theorem we conclude
$$ \G^\mu\big[Z^{\delta,\mu} > \delta^{-1+\h\at}\big] \,\le\, \liminf_{\e\to0} \G^\mu_\e\big[Z^{\delta,\mu}_{\e} > \delta^{-1+\h\at}\big] \,\le\, \delta^\beta $$
and
\begin{eqnarray*}
 \G^\la\big[Z^{\delta,\mu} < \2\delta^{\at-\af+2\beta}\big] 
 &\le& \liminf_{\e\to0} \G^\la_\e\big[Z^{\delta,\mu}_\e < \2\delta^{\at-\af+2\beta}\big] \\
 &\le& 4 \delta^{2\af-\at-2-3\beta} + (1-C_0)^{J(\delta)}\,,
\end{eqnarray*}
where $J(\delta)$ and $C_0$ are chosen according to Lemma~\ref{lem:numgoodtrian}.
  
Because of inequality (\ref{eq:inexp}), namely $2\af-\at>2$, we can now choose a sequence $\delta_n$, $n\in\N$, converging fast enough to zero such that the bounds on the right hand sides are summable. Then the Borel-Cantelli Lemma yields
$$ \G^\mu[Z^{\delta_n,\mu} > \delta_n^{-1+\h\at}\text{ for infinitely many }n] \,=\, 0 $$
and
$$ \G^\la[Z^{\delta_n,\mu} < \2\delta_n^{\at-\af+2\beta}\text{ for infinitely many }n] \,=\, 0\,. $$
Because of inequality (\ref{eq:inexp}) again, we have $1-\2\at<\af-\at-2\beta$, which implies
$$ \delta^{-1+\h\at} \,<\, \2\delta^{\at-\af+2\beta} $$
for $\delta<1$ small enough. Thus we conclude
$$ \G^\la[Z^{\delta_n,\mu} > \delta_n^{-1+\h\at}\text{ for infinitely many }n] \,=\, 1\,. $$
Therefore we detected an event which has probability zero under $\G^\mu$, but probability one under $\G^\la$. This concludes the proof of Theorem~\ref{mainthm}.

Let us remark that we used inequality (\ref{eq:inexp}) only in the very last paragraph. In fact, this is the only place where we need a property proven only for of the hexagonal lattice, namely the values of two critical exponents.

\section{Consequences for Conformal Maps} \label{sec:conf}

The critical scaling limit is conformally invariant. Does a similar statement hold for nearcritical limits? We can use the result above to give a negative answer to that question. 

Let $D$ be a domain and $f:D\to\ti{D}$ be a conformal map. We consider percolation with $p^\mu_\e=\2+\mu\cdot\e^2/\al_4^\e(1)$ in both domains. Let $a\in\del D$ and $\ti{a}:=f(a)$. We impose some corresponding boundary colours near $a$ and $\ti{a}$. Let $\g_\e$ respectively $\ti{\g}_\e$ be the discrete exploration paths starting at $a$ respectively at $\ti{a}$.  If $\g_\e(P^\mu_\e)\to\G^\mu$ and $\ti{\g}_\e(\ti{P}^\mu_\e)\to\ti{\G}^\mu$ weakly, we consider the following question: How are the laws $f(\G^\mu)$ and $\ti{\G}^\mu$ related? We give an answer in the special case considering a scaling map on $H_r$ for some $r>0$. 

\begin{corol} \label{cor:conf}
 Let $D=H_r$ for some $r>0$ and let $f:D\to\ti{D}$ be the scaling map with factor $\s\in\R^+$, i.e. $f(z)=\s z$. Assume $\g_\e(P^\mu_\e)\to\G^\mu$, $\ti{\g}_\e(\ti{P}^\mu_\e)\to\ti{\G}^\mu$ weakly and that $\ti{P}^\mu_\e(\hc{\ti{q}})$ converge as $\e\to0$ for every quad $\ti{q}$ in $\ti{D}$.
 
 If $\s=1$ or $\mu=0$, $f(\G^\mu)$ and $\ti{\G}^\mu$ are identically distributed. But if $\s\ne1$ and $\mu\ne0$, the laws $f(\G^\mu)$ and $\ti{\G}^\mu$ are singular with respect to each other.
\end{corol}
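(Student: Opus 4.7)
The plan first dispenses with the two degenerate cases. If $\s=1$, then $f=\id$, $\tilde D=D$ and $\tilde\g_\e=\g_\e$, so $f(\G^\mu)=\tilde\G^\mu$ trivially. If $\mu=0$, both $P^0_\e$ and $\tilde P^0_\e$ are critical with Bernoulli parameter $\tfrac12$, and the discrete scaling $f$ sends the hexagonal lattice of mesh $\e$ in $H_r$ to that of mesh $\s\e$ in $H_{\s r}$ without changing the marginals; hence $f(\g_\e)$ under $P^0_\e$ and $\tilde\g_{\s\e}$ under $\tilde P^0_{\s\e}$ are identically distributed. Letting $\e\to0$ and using continuity of $f$ then gives $f(\G^0)=\tilde\G^0$.

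For the main case $\s\ne1$, $\mu\ne0$ the strategy is to recognise $f(\G^\mu)$ as itself a nearcritical scaling limit on $\tilde D$ but at a parameter \emph{different from} $\mu$, and then to invoke Theorem~\ref{mainthm}. The random discrete curve $f(\g_\e)$ is the exploration path on $\tilde D$ with mesh $\s\e$ whose Bernoulli parameter is still $p^\mu_\e=\tfrac12+\mu\,\e^2/\al_4^\e(1)$; on $\tilde D$ at mesh $\s\e$ this corresponds to the nearcritical parameter
\[ \mu^*_{\s\e}\;=\;\mu\,\s^{-2}\,\frac{\al_4^{\s\e}(1)}{\al_4^\e(1)}. \]
By the discrete scale invariance $\al_4^{\s\e}(1)=\al_4^\e(1/\s)$ together with the ratio-limit theorem $\al_4^\e(\delta)/\al_4^\e(1)\to\delta^{-5/4}$ (\cite[Proposition~4.7]{gps10}, cited in the remark after Lemma~\ref{lem:evg-disc}) one obtains $\mu^*_{\s\e}\to\mu^*:=\mu\,\s^{-3/4}$, and since $\s\ne1$ and $\mu\ne0$, $\mu^*\ne\mu$.

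Next I would extract a subsequence $\e_k\to0$ along which, in addition to the two weak convergences given in the hypothesis, $P^\mu_{\e_k}[\hc q]$ converges for every quad $q$ in $D$ (possible by \cite[Corollary~1.15 and Lemma~5.1]{ss11}) and $\tilde P^\mu_{\s\e_k}[\hc{\tilde q}]$ converges for every quad $\tilde q$ in $\tilde D$ (a diagonal extraction from the hypothesised convergence of $\tilde P^\mu_\e[\hc{\tilde q}]$). Continuity of $f:\mathcal{S}_r\to\mathcal{S}_{\s r}$ then yields $f(\G^\mu)=\lim_k f\bigl(\g_{\e_k}(P^\mu_{\e_k})\bigr)$, and the identification of the law of $f(\g_\e)$ from the previous paragraph rewrites this weak limit as a nearcritical scaling limit on $\tilde D$ at parameter $\mu^*$, i.e.\ as $\tilde\G^{\mu^*}$.

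Finally Theorem~\ref{mainthm} is applied on $H_{\s r}$ to the two parameters $\mu$ and $\mu^*$. Its hypothesis asks for convergence of the quad-crossing probabilities at the \emph{smaller} of the two parameter values; when that is $\mu$ this was just secured, and when it is $\mu^*$ one uses the scaling identity $\tilde P^{\mu^*_{\s\e}}_{\s\e}[\hc{\tilde q}]=P^\mu_\e[\hc{f^{-1}(\tilde q)}]$ together with the remark after Theorem~\ref{mainthm} allowing the reversed inequality $\mu>\la$. The theorem then delivers mutual singularity of $\tilde\G^\mu$ and $f(\G^\mu)=\tilde\G^{\mu^*}$, which is the corollary. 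The main (and essentially only) subtlety will be the bookkeeping around the various subsequences and mesh sizes; once the identification $f(\G^\mu)=\tilde\G^{\mu^*}$ with $\mu^*\ne\mu$ is in place, the singularity itself is immediate from Theorem~\ref{mainthm}.
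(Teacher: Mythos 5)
Your proposal is correct and follows essentially the same route as the paper: the trivial cases are dispatched identically, the scaling computation with $\al_4^{\s\e}(1)=\al_4^\e(1/\s)$ and the ratio limit theorem identifies $f(\G^\mu)$ as the nearcritical limit on $\ti{D}$ at parameter $\mu\s^{-3/4}\ne\mu$, and Theorem~\ref{mainthm} then gives singularity. Your handling of the quad-crossing hypothesis when $\mu\s^{-3/4}<\mu$ offers two sufficient routes (the scaling identity plus a Schramm--Smirnov subsequence extraction, or the remark after Theorem~\ref{mainthm}); the paper simply invokes the remark, but either suffices.
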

\begin{proof}
 The statement is clear if $\s=1$ since then $f$ is the identity map. If $\mu=0$ we are in the well-known critical case. Thus we may assume $\s\ne1$ and $\mu\ne0$. Let $\om_\e$ be a realization of percolation in $D$ with mesh size $\e$ and $p^\mu_\e=\2+\mu\e^2/\al_4^\e(1)$. Then $f(\om_\e)$ is a realization of percolation in $\ti{D}$ with mesh size $\s\e=:\zeta$. Each hexagon of $f(\om_\e)$ is blue with probability
  $$p_{\zeta}' =  \h+\mu\frac{\e^2}{\al_4^\e(1)} 
  = \h+\mu\frac{\al_4^{\s\e}(1)}{\s^2\al_4^\e(1)} \cdot\frac{(\s\e)^2}{\al_4^{\s\e}(1)} 
  = \h+\mu\s^{\frac{5}{4}}(1+o(1))\s^{-2} \cdot\frac{\zeta^2}{\al_4^{\zeta}(1)}\,, $$
 where we used $\al_4^{\s\e}(1)=\al_4^{\e}(\s^{-1})$ and the ratio limit theorem \cite[Proposition 4.9.]{gps10} stating $\lim_{\e\to0}\al_4^\e(\delta)/\al_4^\e(1)=\delta^{-5/4}$. Therefore $f(\om_\e)$ is a realization of percolation in $\ti{D}$ with mesh size $\zeta$ and $p_\zeta^\la=\2+\la_\zeta\cdot\zeta^2/\al_4^{\zeta}(1)$ where $\la_\zeta\to\mu\s^{-3/4}=:\la\ne\mu$. Therefore $f(P^\mu_{\zeta/\s})=\ti{P}^\la_\zeta$. Note that $\ti{\g}_\zeta\circ f = f \circ \g_{\zeta/\s}$ by the definition of the exploration paths. Thus $\ti{\g}_\zeta(\ti{P}^\la_\zeta)=\ti{\g}_\zeta(f(P^\mu_{\zeta/\s}))=f(\g_{\zeta/\s}(P^\mu_{\zeta/\s}))$. As we assumed that $\g_\e(P^\mu_\e)$ converge weakly to $\G^\mu$, it follows that $\ti{\g}_\zeta(\ti{P}^\la_\zeta)$ converge weakly to $f(\G^\mu)=:\ti{\G}^\la$ since $f$ is continuous.

 On the other hand, $\ti{\g}_\e$ is the discrete exploration path of percolation in $\ti{D}$ with $p^\mu_\e=\2+\mu\cdot\e^2/\al_4^\e(1)$, whose law converges weakly to $\ti{\G}^\mu$. By Theorem~\ref{mainthm}, $\ti{\G}^\mu$ and $f(\G^\mu)$ are singular with respect to each other (even if $\mu\s^{-\frac34}<\mu$ by the remark in the second paragraph after stating the theorem). 
\end{proof}

\subsection*{Acknowledgement}
The author is grateful to Franz Merkl for stimulating discussions and helpful remarks. This research was supported by a scholarship of the Cusanuswerk, one of the German national academic foundations.

\end{document}